\newtheorem{assumption}{Assumption}[section]
\theoremstyle{plain}
\theoremstyle{plain}
\newtheorem{thm}{Theorem}
  \theoremstyle{definition}
  \newtheorem{defn}[thm]{Definition}
  \theoremstyle{plain}
  \newtheorem{lem}[thm]{Lemma}
\DeclareMathOperator*{\argmin}{argmin}
\begin{document}

\title{Delayed Feedback Control Requires an Internal Forward Model}

\author{Dmitry Volkinshtein and Ron Meir\\
Department of Electrical Engineering\\
Technion, Haifa 32000, Israel{\footnotesize }\\
{\footnotesize dmitryvolk@gmail.com; rmeir@ee.technion.ac.il}\\
}
\maketitle
\begin{abstract}
Biological motor control provides highly effective solutions to difficult
control problems in spite of the complexity of the plant and the significant
delays in sensory feedback . Such delays are expected to lead to non
trivial stability issues and lack of robustness of control solutions.
However, such difficulties are not observed in biological systems
under normal operating conditions. Based on early suggestions in the
control literature, a possible solution to this conundrum has been
the suggestion that the motor system contains within itself a forward
model of the plant (e.g., the arm), which allows the system to `simulate'
and predict the effect of applying a control signal. In this work
we formally define the notion of a forward model for deterministic
control problems, and provide simple conditions that imply its existence
for tasks involving delayed feedback control. As opposed to previous
work which dealt mostly with linear plants and quadratic cost functions,
our results apply to rather generic control systems, showing that
any controller (biological or otherwise) which solves a set of tasks,
\emph{must} contain within itself a forward plant model. We suggest
that our results provide strong theoretical support for the necessity
of forward models in many delayed control problems, implying that
they are not only useful, but rather, mandatory, under general conditions.
\end{abstract}

\section{Introduction}

The motivation for this work arose from biological motor control,
which is plagued by inherent delays arising in sensory pathways, central
processing units and motor outputs \cite{DavWol05,Kaw99}. However,
the results established shed light on any feedback control system,
subject to observation delays. Such delays, which in primates may
reach 200-300 ms for visually guided arm movements, are very large
compared to fast (150 ms) and intermediate (500 ms) movements \cite{DavWol05,Kaw99},
and may lead to significant difficulties, as inappropriate control
might cause instability or degraded performance. Delays have historically
played a minor role in the field of robotics, as they can usually
be made extremely small in such engineering applications. However,
delayed state feedback has become increasingly important in engineering
fields such as chemical control, distributed system control \cite{Thomopoulos94}
and multisensory tracking \cite{Bar-Shalom02}. In fact, one of the
first attempts within the biological motor control literature \cite{MiaWeiWol93}
to address these issues was based on a well known concept from control
theory, namely the Smith predictor \cite{Smith57}. However, one should
keep in mind that in attempting to understand biological control systems,
based on control theoretic principles, one is in fact trying to `reverse
engineer' an unknown system, as opposed to the task facing an engineer,
namely \emph{designing} a control system (see \cite{WelBulKal08}
for a survey of the possible role of control theory in systems biology).
\\

Within an optimal control based approach, one needs to specify a class
of admissible control laws, a set of plant constraints (e.g., musculo-skeletal),
and a quantitative definition of performance, typically formulated
in terms of a cost function. An optimal control law is then derived
by minimizing a cost function subject to the relevant constraints.
However, within a biological context, the precise nature of the plant
and the controller is seldom known precisely, and the cost function
used by the system (if indeed one is used), may also be unknown. It
would thus be useful to determine general conditions for the necessity
of a forward model, which require as few assumptions as possible.
While a solution to the delay problem in the form of a forward model
is indeed plausible and intuitively appealing \cite{Smith57}, the
question arises as to whether it is \emph{mandatory}, namely, is it
possible to construct an optimal closed-loop control law which is
\emph{not} based on a forward model? As we show in this paper, the
answer to this question is negative, under very mild and reasonable
conditions. More specifically, we show that (under appropriate conditions)
an optimal feedback control law based on \emph{delayed} state observations,
\emph{must} incorporate within itself a forward model of the plant.
\\

As far as we are aware, there is currently no general theory which
provides precise conditions for which forward models are indeed necessary.
Early work, mainly concerned with the linear case (e.g., \cite{Fuller68,Kleinman69,WatIto81}),
suggested several approaches to delayed control problems, including
the proposal that a predictive plant model is needed, as in \cite{Smith57}.
For example, \cite{Kleinman69} showed that optimal control for linear
systems based on minimizing a quadratic cost is obtained by cascading
a Kalman filter and a least-mean square state predictor. Later work
extended these results in various directions. For example, \cite{WatIto81}
suggested an approach to dealing with disturbance attenuation and
\cite{MirRas03}, focusing on stability issues, extended these results
to more general linear systems, showing that state prediction is indeed
a necessary component of such controllers. A survey of many aspects
of this work, circa 2003, appears in \cite{GuNic03}. We note that
much of this work has dealt with the design of actual controllers
(often for linear systems and quadratic cost). As mentioned above,
our perspective in this work is somewhat removed from controller design,
as we are concerned with a \emph{reverse} engineering problem. More
concretely, we begin with an observed control system, operating effectively
under conditions of delayed state observations, and demonstrate that
\emph{any} effective controller \emph{must} contain a forward plant
model. Since it is hard, in general, to make even qualitatively correct
assumptions about the system (e.g., linearity of dynamics and quadratic
cost), we attempt to provide the most general result possible. \\

Before proceeding to a detailed description of our results, we note
that the notion of an internal model has played an important role
in control theory also in other contexts. Francis and Wonham \cite{FraWon75}
were the first to show that stable adaptation (a.k.a. regulation)
requires the existence of an internal model. Adaptation refers to
a situation where the output of the system maintains a constant asymptotic
value whenever the system is subject to inputs from some class of
signals. Intuitively, such an internal model enables the system to
`subtract' external inputs, thereby eliminating their long term effect
on the system. Recently, a powerful extension of this theory was proposed
in \cite{Sontag03}, where it was demonstrated to hold under very
general conditions, without requiring the split into a `plant' and
a `controller' which was required in the original framework of \cite{FraWon75}.
Interestingly, this general theory has been applied to bacterial chemotaxis
and shown to provide interesting novel insight in other biological
situations as well. \\

In summary, our main contribution in this work is the establishment
of precise mathematical conditions for generic deterministic delayed
feedback control systems to possess an internal forward model (we
comment on the extension to the stochastic setting in Section \ref{sec:Discussion},
but leave the full elaboration of this direction to future work).
The generality of the results, and the nature of the conditions required
for them to hold, set the stage for the development, and experimental
verification, of a rigorous theory of delayed feedback control in
biological systems. \\

The remainder of the paper is organized as follows. Section \ref{sec:Results}
presents an overview of the main results, outlining sufficient conditions
for delayed feedback control systems to possess a forward model. Specifically,
in section \ref{sub:Problem-definition} we outline the problem, followed
by a simple example in section \ref{sub:Simple-example} and a summary
of the main results in section \ref{sub:General-results}. In section
\ref{sub:res-Linear-time-optimal} we apply the general ideas to linear
systems with time optimal control and delayed state observations,
while in section \ref{sub:Minimum-Jerk-Optimal} we consider the problem
of minimum jerk control. Section \ref{sec:methods} contains precise
mathematical definitions and full proofs of the main results, including
a full analysis of two examples presented cursorily in section \ref{sec:Results}.
Finally, in section \ref{sec:Discussion} we summarize our results
and present some open research questions. \\

\section{\label{sec:Results}Results}

This section contains a relatively informal summary of our main results.
Precise definitions, assumptions, theorems and proofs appear in section
\ref{sec:methods}. We begin by presenting the problem formulation,
followed by a description of conditions for which a forward model
is \textcolor{black}{mandatory}. We will then use the general necessary
conditions established to show that in linear time optimal control
and minimum jerk optimal control, based on delayed state observations,
a forward model is indeed \textcolor{black}{required}.\\

\subsection{\label{sub:Problem-definition}Problem definition}

\begin{center}
Figure 1 about here
\end{center}

Consider a system to be controlled, referred to as a \emph{plant}.
A plant is usually described by a \emph{state }vector\emph{ }$x^{p}\in X\subseteq\mathbb{R}^{n}$.
For example, in a 2-D motor control setting with joint torques as
control inputs, the plant is a 2-D manipulator. Its state consists
of a pair of joint angles and two velocities. Assuming that the joint
angles take values in the range $[0,\pi]$, while the velocities can
assume any real value, we have $X=[0,\pi]^{2}\times\mathbb{R}^{2}$.
The plant state dynamics are typically given by a differential equation
of the form \begin{equation}
\dot{x}_{t}^{p}=A_{p}(x_{t}^{p},u_{t}),\label{eq:plant_desc}\end{equation}
where $x_{t}^{p}$ is the state of the plant at time $t$, $\dot{x}_{t}^{p}$
denotes the temporal derivative of $x_{t}^{p}$, $u_{t}\in U$ is
the \emph{control} at time $t$, chosen from a set of possible controls
$U$, and $A_{p}$ is a function mapping the state and control to
$\mathbb{R}^{n}$, namely $A_{p}:X\times U\rightarrow\mathbb{R}^{n}$.
In the above example of a 2-D manipulator, assuming that the torque
is bounded in magnitude by $1$, we have $U=[-1,1]^{2}$. \\

In this work we study controllers possessing a memory which, as we
demonstrate, is essential in the case of optimal control with delayed
observations. The memory of the controller at time $t$ can be conceived
of as the controller's state at time $t$. For example, it is well
known \cite{Smith57} that when controlling a plant with delayed state
observation of duration $D$, using the previous controls $\left\{ u_{s}\right\} $
for $t-D\leq s\leq t$ can be useful in order to calculate the current
state of the plant. In this case the controller's memory can be described
by a function $x_{t}^{c}(\cdot)$, where $x_{t}^{c}(\alpha)=u_{t-\alpha}$
for all $0\leq\alpha\leq D$, namely the delayed control.\\

In order to rigorously investigate the notion of a forward model and
derive conditions for its existence, we quantify this notion mathematically
in section \ref{sec:meth-Problem-definition}; here we summarize the
main ideas. In the deterministic delayed state feedback case considered
here, we define a forward model by the ability of the controller to
compute $x_{t}^{p}$, the exact state of the plant at time $t$, given
the delayed observation $x_{t-D}^{p}$ and its memory $x_{t}^{c}(\cdot)$.
This ability to predict the exact state of the plant is equivalent
to the existence of a transformation $F$ such that

\begin{equation}
x_{t}^{p}=F(x_{t}^{c},x_{t-D}^{p})\,\qquad\qquad\textrm{(forward\ensuremath{\,} model)}.\label{eq:forward-model}\end{equation}

In order to clarify the definition, consider a situation when a controller
\emph{does not} possess a forward model. This occurs when the relevant
information available to the controller at time $t$ does \emph{not}
suffice to determine the current plant state $x_{t}^{p}$ unambiguously.
More precisely, based on the current relevant information, $(x_{t-D}^{p},x_{t}^{c})$,
the controller cannot determine $x_{t}^{p}$. Note that the controller
in our model has additional information beyond $(x_{t-D}^{p},x_{t}^{c})$
(see Figure \ref{fig:The-system}); as we claim later, this is irrelevant
to the estimation of the current state $x_{t}^{p}$. \\

The need for a forward model can be established for many scenarios
such as regulation, tracking and optimal control, and the proof is
similar for all. We therefore use a common notion of\emph{ tasks}
to refer to all the above. An example of a task for a 2-D manipulator
is reaching some point $x^{p*}$ on a plane within a prespecified
period of time, or, alternatively, in minimum time. Another possible
task would be holding the manipulator still for $10$ seconds. Clearly,
one can envisage any number of such tasks. The set of all tasks of
interest will be denoted by $X^{*}$. Tasks are fed to the controller
sequentially, and it is assumed that each task can be performed for
each initial state. Note that the system is assumed to be causal,
thus the controller has access only to the current task that should
be performed and not to future tasks. The system described, based
on delayed state observations, is illustrated in figure \textcolor{red}{\ref{fig:The-system}.
}The solving set of control laws for task $x^{*}$, up to time $t$,
is denoted by $U_{t}^{*}(x^{p},x^{*})$ where $x^{p}$ is the initial
state of the plant.\textcolor{red}{}\\

\textcolor{black}{We will show in the sequel that the `richness' of
the set of tasks $X^{*}$, and the corresponding control solutions
$U_{t}^{*}(x^{p},x^{*})$ can make a difference, as to whether a controller
solving the task must possess a forward model or not. For example,
in section \ref{sub:Simple-example} we introduce a plant and a controller
solving a linear time optimal problem. In the first case, where the
set of target states is $X^{*}=[-1,1]$, we show that a forward model
is indeed essential. However, in the case where the set of targets
is limited to two values, $X^{*}=\{-1,1\}$, we give an example of
memoryless controller, which does }\textcolor{black}{\emph{not}}\textcolor{black}{~possess
a forward model, while still solving the optimal control problem perfectly
(i.e., a forward model is not needed in this case).}\textcolor{red}{}\\

Next, we introduce a switching process, $z_{t}$, which defines the
times at which new tasks are specified. Each task is assumed to be
fixed between two consecutive task initiations. A precise definition
of the switching process can be found in section \ref{sec:methods}.
A control law is then defined by \begin{equation}
u_{t}=B_{c}\left(x_{t}^{*},z_{t},x_{t}^{c},\tilde{x}_{t-D}^{p}\right),\label{eq:contr-form}\end{equation}
where $D$ is the observation delay, $x_{t}^{*}\in X^{*}$ is the
task to be performed at time $t$, and $B_{c}$ is a given function.
We have introduced the notation $\tilde{x}_{t-D}^{p}=x_{(t-D)_{+}}^{p},$
where $(x)_{+}=\max(0,x)$, in order to deal systematically with times
$t<D$. In addition to the control signal itself, we consider the
dynamics of the controller's state (memory). One standard formulation
is in terms of a differential equation,\begin{equation}
\begin{cases}
\dot{x}_{t}^{c}(\alpha)=A_{c}\left(x_{t}^{*},z_{t},x_{t}^{c},\tilde{x}_{t-D},\alpha\right)\\
x_{0}^{c}(\alpha)=D_{c}(x_{0}^{p},x_{0}^{*},\alpha)\end{cases},\label{eq:contr-mem-ode}\end{equation}
 where, $A_{c}$ and $D_{c}$ are given functions describing the dynamics
and initial conditions respectively.

In the definition of a forward model, we stated that the relevant
information available to the controller regarding the current state
$x_{t}^{p}$ is $(x_{t-D}^{p},x_{t}^{c})$. The controller has additional
information available at time $t$, consisting of $x_{t}^{*}$ and
$z_{t}$. However, since a new task can be specified at any time (independently
of the value of $x_{t}^{p}$), the current state $x_{t}^{p}$ cannot
depend on these values. \\

\subsection{\label{sub:Simple-example}Example - a simple linear time optimal
control problem }

\begin{center}
Figure 2 about here
\end{center}The abstract ideas introduced in the previous section are clarified
through a simple example. Consider a linear \emph{one dimensional}
time optimal control problem, where the objective is to drive the
plant (described by a single real-valued variable $x^{p}$), to a
point $x^{*}\in X^{*}=X$ in \emph{minimum time}. The plant dynamics
are given by \begin{equation}
\dot{x}_{t}^{p}=-x_{t}^{p}+u_{t}\qquad;\qquad u_{t}\in[-1,1]\,.\label{eq:ex1-ode}\end{equation}
The minimum time cost function is given by \begin{equation}
J(x^{p},x^{*})=\int_{0}^{\tau_{f}}1dt=\tau_{f}\,,\label{eq:simple-ex-cost}\end{equation}
where $\tau_{f}$ is the first time for which $x_{t}^{p}=x^{*}$,
and the initial state of the plant is $x^{p}$. Thus, the controller
needs to minimize $J(x^{p},x^{*})$. The set of tasks here corresponds
to reaching any state $x^{*}\in X$ in minimum time. It is obvious
that if $X=[-1,1]$, all the tasks can be performed, and the optimal
solution in this case is simple and given by $u_{t}=\mathrm{sgn}(x^{*}-x_{t}^{p})$.
This is an example of a so-called bang-bang control, where the control
switches between its extreme allowed values; see figure \ref{fig:Simple-Example}
for a graphical illustration.\\

Before proceeding to establish the existence of a forward model we
summarize the gist of the argument. We start by assuming that a controller
can solve a set of tasks $X^{*}$, based on delayed state observation.
We then argue by contradiction that if the controller lacks a forward
model, then one can find a specific task $x^{*}\in X^{*}$ such that
the controller will not be able to perform the task correctly, in
contradiction to the assumption. Notice that the existence of such
a task is a system related issue that has nothing to do with delays
or a specific {}``black box controller'', as will be explained in
section \ref{sub:General-results} .\\

The argument for the necessity of a forward model in the present example
proceeds as follows (precise statements and proofs appear in sections
\ref{sub:meth-General-Results} and \ref{sub:Linear-time-optimal}).
Assume that we are provided with a black box controller, which performs
the linear time optimal control task optimally, based on \emph{delayed}
state observations. We will show that such a controller must possess
a forward model. Assume to the contrary that it does not, thus there
exist two distinct states, $s^{1}$ and $s^{2},$ $s^{1}\neq s^{2}$,
such that the controller cannot determine whether the plant is currently
in state $x_{t}^{p}=s^{1}$ or $x_{t}^{p}=s^{2}$. In other words,
the controller's available information relevant to the current state,
namely $(x_{t-D}^{p},x_{t}^{c})$, does not suffice to determine $x_{t}^{p}.$
This implies that there exist two trials (namely, two initial states,
times $t_{1}$ and $t_{2}$ and histories of tasks) such that the
available information for both is identical, namely $(x_{t_{1}-D}^{p},x_{t_{1}}^{c})=(x_{t_{2}-D}^{p},x_{t_{2}}^{c})$,
and such that $x_{t_{1}}^{p}=s^{1}$ and $x_{t_{2}}^{p}=s^{2}$ where
$s^{1}\neq s^{2}$. However, if we specify an identical new task at
times $t_{1}$ and $t_{2}$, namely $(x_{t_{1}}^{*},z_{t_{1}})=(x_{t_{2}}^{*},z_{t_{2}})$,
the controller will choose $u_{t_{1}}=u_{t_{2}}$ due to (\ref{eq:contr-form}).
On the other hand, consider the system dynamics (\ref{eq:ex1-ode}),
and choose $x_{t_{1}}^{*}=x_{t_{2}}^{*}=\left(s^{1}+s^{2}\right)/2$,
assuming, without loss of generality, that $s^{1}<s^{2}$. Based on
the exact solution $u_{t}=\textrm{sgn}(x^{*}-x_{t}^{p})$, the optimal
controls are $u_{t_{1}}^{*}=u^{1*}=1$ and $u_{t_{2}}^{*}=u^{2*}=-1$.
However, based on the assumption that the forward model does not exist,
we have shown that $u_{t_{1}}=u_{t_{2}},$ which contradicts the {}``correct
task performing'' assumption. Thus, in this example, a forward model
is indeed required. \\

In order to better understand the requirement for a forward model,
we consider an example where such a model is \emph{not} needed. Consider
the example discussed above, but where the set of tasks (destination
states) consists of only two points $X^{*}=\{-1,1\}$. In this case
a simple memoryless controller such as $u_{t}=x_{t}^{*}$ is optimal,
and clearly lacks a forward model. The reason for this is simple.
When two states $s^{1}\neq s^{2}$ are given, one cannot find a task
$x^{*}$ such that the controls from $s^{1}$ and $s^{2}$ will differ.
The reason is that if $x^{*}=-1$, the controller has to use $u=-1$
and if $x^{*}=1$, it has to choose $u=1$ \emph{independently} of
the initial state. Intuitively, the controller is not required to
know the exact state of the plant in order to be optimal (perform
the task). This simple example and intuition will form the basis of
our general proof in section \ref{sub:meth-General-Results}.

\subsection{General results\label{sub:General-results}}

Having argued for the existence of a forward model in a simple linear
example, we extend the results to a general setting. To do this, we
need to specify when a controller works {}``well''. Such a controller
should perform all possible sequences of tasks correctly, which means
that at each time, $t_{i}$, where a new task is given, the control
signal for the task should belong to the set of controls $U_{t_{i+1}-t_{i}}^{*}(x_{t_{i}}^{p},x_{t_{i}}^{*})$
performing the tasks correctly between the times $t_{i}$ and $t_{i+1}$.
We will refer to such a system as a \emph{Correct Task Performing
System} (CTPS); a precise characterization is provided in definition
\textcolor{red}{\ref{def:optimal-system}}. This definition, based
on the assumption that the task can always be solved, allows one to
build a state feedback controller easily. We show that under these
circumstances, a {}``delayed state feedback controller'' can be
built as well. We refer the reader to Theorem \ref{thm:meth-can-build}
for a precise statement of the result. \\

The proof of Theorem \ref{thm:meth-can-build} is based on building
a controller that uses delayed observations, by defining the memory
of the controller to be $x_{t}^{c}(\alpha)=u_{t-\alpha}$. Then, given
the observation $x_{t-D}^{p}$ , the current state $x_{t}^{p}$ can
be reconstructed by solving the differential equation for the plant
with initial condition $x_{t-D}^{p}$, where the previous controls
are taken from the memory. Once the real state $x_{t}^{p}$ is available,
we can choose the control from the set $U_{t}^{*}$. \\

As demonstrated in the simple example presented in section \ref{sub:Simple-example},
a forward model may not always be necessary. As shown in section \ref{sub:meth-General-Results},
the necessity of a forward model can be demonstrated in situations
where the problem is sufficiently `rich'. In the example above, when
the task set is binary, namely $X^{*}=\left\{ -1,+1\right\} $, no
forward model was required, while if $X^{*}=\left[-1,+1\right]$ a
forward model is indeed required. This idea of problem richness is
formalized in section \ref{sub:meth-General-Results}. We will refer
to a problem as sufficiently `rich' by saying that it does \emph{not}
contain \emph{Non Separable by Correct Task Performing} (NSCTP) pairs
of states; see definition \ref{def:A-pair-of} for a precise characterization.
Intuitively, we say that a pair of states is\emph{ }NSCTP when for
\emph{every} task, the \emph{same} correct control exists at time
$0$ for both states (however, the control may differ for each task).
The main contribution of this paper, Theorem \ref{thm:meth-have-fm-det},
establishes the existence of a forward model when NSCTP pairs of states
do not exist (i.e., the absence of NSCTP pairs of states is a sufficient
condition for a forward model to exist). \\

As a specific illustration of this idea, let us look back at the example
in section \ref{sub:Simple-example}. We implicitly proved there that
the system does not have NSCTP pairs of states by finding a task $x^{*}=\left(s^{1}+s^{2}\right)/2$,
and showing that it leads to $u^{1*}=1$ and $u^{2*}=-1$. The existence
of a forward model in this case (and in more general cases to be studied
in the sequel) follows from theorem \ref{thm:meth-have-fm-det}.

\subsection{\label{sub:res-Linear-time-optimal}Linear time optimal control}

We consider an \emph{optimal setpoint tracking} problem within linear
control theory. The objective here is to reach, from an arbitrary
initial position, a predefined setpoint $x^{*}$ in minimal time.
In this case $X^{*}=X=\mathbb{R}^{n}$.\\

The cost function $J$, penalizing for time expended on the task,
is \begin{equation}
J(x^{0},u,\tau)=\int_{0}^{\tau}1dt=\tau,\label{eq:time-opt-cost}\end{equation}
where the initial state is $x^{0}$. The plant's linear dynamics are
described by the ODE

\begin{equation}
\begin{cases}
\dot{x}_{t}=Mx_{t}+Nu_{t}\\
x_{0}=x^{0}\end{cases}\qquad;\qquad u_{t}\in[-1,1]^{m}\,,\label{eq:lin-dyn}\end{equation}
where $M$ and $N$ are matrices of dimensions $n\times n$ and $n\times m$
respectively. The results can be generalized to more complicated sets
of controls. We use theorem \ref{thm:meth-have-fm-det} to provide
sufficient conditions for the existence of a forward model in this
case. This is done by showing that linear time optimal control with
delayed state feedback has no NSCTP pairs of states, thereby fulfilling
the necessary conditions of the theorem. The precise statement of
this result is provided in Theorem \ref{thm:lin-time-opt-have-fm}.\\

The proof that the system has no NSCTP pairs of states is based on
geometrical properties of accessible sets, and can be found in section
\ref{sub:Linear-time-optimal}. Using Theorem \ref{thm:lin-time-opt-have-fm},
the need for a forward model in the simple example presented in section
\ref{sub:Simple-example} can be established trivially, since the
matrices $M$ and $N$ are given by $M=-1$ and $N=1$, which leads
to a normal system (a required assumption for theorem \ref{thm:lin-time-opt-have-fm}),
and the set $X=[-1,1]$ satisfies the other assumptions needed.

\subsection{\label{sub:Minimum-Jerk-Optimal}Minimum Jerk Optimal Control}

Many models for the control of human arm movements have been suggested
in an attempt to explain experimental results. The minimum jerk model
was probably the first approach to address these issues based on optimal
control principles \cite{flash84coordination}. In this approach,
a two degree of freedom manipulator endpoint is controlled on a plane
by applying jerk (the third derivative of the position). The task
that the system should perform is taking the plant from some initial
state to a final state in time $T$, minimizing the total accumulated
squared jerk. We show that such a problem, where $T$ is a part of
the task, possesses no NSCTP pairs of states, and therefore by theorem
\ref{thm:meth-have-fm-det}, a CTPS controller based on delayed inputs
must contain a forward model. \\

In this model the state consists of the end-point of the manipulator's
displacement, velocity and acceleration in a plane,

\begin{equation}
\underline{x}^{p}=\left(x,y,\dot{x},\dot{y},\ddot{x},\ddot{y}\right)^{\top}=\left(x,y,u,v,z,w\right)^{\top},\label{eq:min_jerk_sys}\end{equation}
with dynamics\begin{equation}
\dot{\underline{x}}^{p}=\left(\dot{x},\dot{y},\ddot{x},\ddot{y},\delta,\gamma\right)^{\top},\label{eq:min_jerk_dynamics}\end{equation}
where $\delta$ and $\gamma$ are the controls, namely $\underline{u}=\left(\dddot{x},\dddot{y}\right)^{\top}=\left(\delta,\gamma\right)^{\top}.$
We define a task termed \emph{optimal setpoint tracking in constant
time} where the plant must be controlled so that it reaches some state
$x^{p*}$, with zero velocity and acceleration, while optimizing a
cost function $J$, when the initial state of the plant is $x$ and
the time for reaching the goal is $T$ (which is itself part of the
task). Therefore the task is given by $x^{*}=(x^{p*},T)$ and $x^{p*}\in\tilde{X}$,
where \begin{equation}
\tilde{X}=\{x\in X\;|\; u=v=z=w=0\}.\label{eq:res-tildeX}\end{equation}
The cost function is \begin{equation}
J(\underline{x}_{0}^{p},u,T)=\frac{1}{2}\int_{0}^{T}\left(\dddot{x}_{t}^{2}+\dddot{y}_{t}^{2}\right)dt,\label{eq:min_jerk_cost}\end{equation}
with initial conditions $\underline{x}_{0}^{p}=\left(x_{0},y_{0},x_{d0},y_{d0},x_{dd0},y_{dd0}\right)^{\top}$
and boundary conditions $\underline{x}_{T}^{p}=\left(x_{T},y_{T},0,0,0,0\right)^{\top}=\underline{x}^{p*}.$
\\

As was shown in \cite{flash84coordination}, each coordinate, $x$
and $y$, can be computed separately and identically, and the solution
for $x$ has the following form \begin{equation}
x_{t}=a_{0}+a_{1}t+a_{2}t^{2}+a_{3}t^{3}+a_{4}t^{4}+a_{5}t^{5},\label{eq:min-jerk-explicit-sol}\end{equation}
where the constants $a_{i}$ depend on $T$, on the initial conditions
and on $x^{p*}$. Theorem \ref{thm:meth-min-jerk-have-fm} proves
that for this system a forward model is indeed essential. The proof
is based on theorem \ref{thm:meth-have-fm-det} after showing that
the system has no NSCTP pairs of states.\\

Note that when $T$ is constant and is not a part of the control task,
the system has an infinite number of NSCTP pairs, and a similar proof
will not work because it relies on the absence of NSCTP pairs in the
system. However, this does not imply that a forward model is not needed,
but rather that higher order conditions may be required.

\section{\label{sec:methods}Methods and Detailed Proofs }

In this section we rephrase, in a formal mathematical language, the
ideas and results introduced and presented intuitively in section
\ref{sec:Results}. We begin with several technical definitions which
will be required in the sequel.

\subsection{\label{sec:meth-Problem-definition}Basic definitions}

Let $X\subseteq\mathbb{R}^{n}$ be a set of states and $U\subseteq\mathbb{R}^{m}$
the set of possible actions that the controller can choose from. We
use an underline to denote the history of a dynamic variable between
time zero and time $t$, e.g., $\underline{u}_{t}:\;[0,t]\rightarrow U$
and similarly for arbitrary times $[t_{1},t_{2}]$ we use $\underline{u}_{[t_{1},t_{2}]}:\;[t_{1},t_{2}]\rightarrow U$.
Denote by $\mathcal{U}_{t}$ the set of possible piecewise continuous
controls that can be selected up to time $t$, namely $\mathcal{U}_{t}\triangleq\left\{ \underline{u}_{t}:\;\underline{u}_{t}\text{ is piecewise continuous on}\;[0,t]\right\} $.
The plant is given in (\ref{eq:plant_desc}).\\

We introduce a set of tasks to be solved, and a set of controls which
solve these tasks.
\begin{defn}
Let $X^{*}$ be a set of tasks that need to be solved by the controller,
and let $x^{*}$ be a specific task. The set of\emph{ task solving
controls,} $U_{t}^{*}(x^{p},x^{*})$, consists of all piecewise continuous
control laws, in the interval $[0,t]$, that lead to the performance
of task $x^{*}$ when the initial condition is $x^{p}$.
\end{defn}
In the case where the task is completed for $\tau<t$ , the remaining
controls are arbitrary, namely $U_{[\tau,t]}^{*}=\mathcal{U}_{t-\tau}$.
Since we consider situations where the controller executes a series
of tasks, we define the switching task process.
\begin{defn}
\label{def:switching-times}The switching tasks process $z_{t}$ is
defined by $z_{t}\triangleq\sum_{i=0}^{\infty}\delta(t-t_{i}),$ where
$t_{i}$ are the times at which the tasks are switched, and $\delta\left(\cdot\right)$
is the Dirac impulse function.
\end{defn}
The controller is given by (\ref{eq:contr-form}) and its state dynamics
(memory) by (\ref{eq:contr-mem-ode}). While other definitions of
memory may be considered, we limit ourselves in this letter to the
present formulation. We assume that the task definition process $x_{t}^{*}$
is constant between two task switches. It will be convenient in the
sequel to assume that the state space contains all states reachable
for any allowable control law.
\begin{defn}
\label{def:meth-inesc}The set $X\subseteq\mathbb{R}$ is \emph{inescapable}
when for all initial conditions $x_{0}^{p}\in X$, and controls $\underline{u}_{t}\in\mathcal{U}_{t}$,
the state at time $t$ remains in $X$, namely $x_{t}^{p}\in X$.
\end{defn}
In principle, the task solving control laws are not necessarily continuous.
We introduce a subset of continuous control laws.
\begin{defn}
For any $\epsilon>0,$ the set $\tilde{U}_{\epsilon}^{*}(x_{t}^{p},x^{*})\triangleq\{\underline{u}_{\epsilon}\in U_{\epsilon}^{*}(x_{t}^{p},x^{*}):\;\underline{u}_{\epsilon}\,\mathrm{is\, continuous}\}$,
consisting of all continuous task solving controls, is termed the
\emph{continuous task solving control set}.
\end{defn}
Next, we formally introduce the idea of a forward model.
\begin{defn}
A controller possesses a forward model when there exists a transformation
$F$ such that for all times $t$, initial conditions $x_{0}^{p}$,
switching sequences $\underline{z}_{t}$, and tasks $\underline{x}_{t}^{*}$,
the state is given by $x_{t}^{p}=F(x_{t}^{c},\tilde{x}_{t-D}^{p})$
where $\tilde{x}_{t-D}^{p}=x_{(t-D)_{+}}^{p}$.
\end{defn}
\noindent  In section \ref{sub:meth-General-Results} we provide
precise conditions that imply the existence of a forward model.

\subsection{\label{sub:meth-General-Results}General Results}

The present section is constructed as follows. Initially, a system
(plant and controller) with good performance is defined (definition
\ref{def:optimal-system}). We then show that such systems can be
implemented even when the state observation is delayed (Theorem \ref{thm:meth-can-build}).
Finally, whenever the problem is not too trivial (see definition \ref{def:A-pair-of}),
we show that the controller \emph{must} possess a forward model (Theorem
\ref{thm:meth-have-fm-det}).\\

Several assumptions are required before proceeding to the main claims.
We assume that all possible sequences of tasks in $X^{*}$ can be
performed by a controller from any initial condition in $X$, and
we also require that $X$ cannot be escaped by applying legal controls.\\

\begin{assumption}
\label{ass:meth-exist-sol} For each task $x^{*}\in X^{*}$ and initial
state $x^{p}\in X$, a piecewise continuous solution exists, namely,
for any value of $t$, $U_{t}^{*}(x^{p},x^{*})\neq\emptyset$ .
\end{assumption}
\noindent In the sequel we will compare two control laws in a small
interval around $t=0$. In order to do so, based on the values of
the controls at $t=0$, we need to assume the existence of a small
interval over which the task solving controls are continuous. In other
words, for each task $x^{*}\in X^{*}$ and state $x^{p}\in X$, there
exists $\epsilon_{0}(x_{t}^{p},x^{*})>0$ s.t $\tilde{U}_{\epsilon_{0}}^{*}(x_{t}^{p},x^{*})\neq\emptyset$.
The existence of such an interval follows directly from assumption
\ref{ass:meth-exist-sol}.
\begin{assumption}
\label{ass:meth-inescapable}The set $X$ is \emph{inescapable.}~
\end{assumption}
\noindent Given a {}``black box controller'' satisfying certain
conditions, we will demonstrate the existence of a forward model.
\begin{assumption}
\label{ass:meth-piecewise-cont}A task solving {}``black box controller'',
which provides a piecewise continuous and continuous from the right
control signal $u_{t}$, is given.
\end{assumption}
\noindent Next, we define a {}``correctly performing system'',
namely a system which executes all possible sequences of tasks correctly.
\begin{defn}
\label{def:optimal-system}The controller, plant and task space constitute
a \emph{Correct Task Performing System} (CTPS) when for each $\underline{x}_{t}^{*},\, x_{0}^{p},\, t,\underline{\, z}_{t}$,
\[
\underline{u}_{[t_{i},\min(t_{i+1}^{},t)]}\in U_{\min(t_{i+1}^{},t)-t_{i}}^{*}(x_{t_{i}^{}}^{p},x_{t_{i}}^{*})\;\;\mathrm{for\, all}\, i\in\{j:\; t_{j}^{}<t\}.\]
In other words, the controller always selects a signal $u_{t}$ solving
the sequence of tasks.
\end{defn}
At this point we show that \emph{if} there exists a controller without
delay that renders the system CTPS, then a controller with delay can
render the system CTPS as well. The intuitive idea is that in a deterministic
system, the state of the controller can store all past controls, and
thereby simulate the plant in order to predict the current state.
\begin{thm}
\label{thm:meth-can-build}Let $A_{p}$ be a deterministic plant as
in (\ref{eq:plant_desc}) with state variable $x_{t}^{p}$. Then under
assumptions \ref{ass:meth-exist-sol} and \ref{ass:meth-inescapable},
there exists a controller of the form (\ref{eq:contr-form}) such
that the system is CTPS.\end{thm}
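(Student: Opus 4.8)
The plan is to make the text's intuition precise: a deterministic plant can be re-integrated from the last observed state, so a controller that records its own past outputs can always recover the exact current state and then follow a fixed task-solving schedule. First I would use Assumption~\ref{ass:meth-exist-sol} to fix, for each pair $(x^p,x^*)\in X\times X^*$, a single control law $\sigma(x^p,x^*)$ whose restriction to $[0,t]$ lies in $U^*_t(x^p,x^*)$ for \emph{every} horizon $t$; such a law exists because each $U^*_t(x^p,x^*)$ is nonempty and, by the definition of $U^*_t$, a control need only ``lead to'' performance of the task (so a control solving it by some time $\tau$ and arbitrary thereafter works for all horizons). The controller will then, at each task-switch time $t_i$ (an impulse of $z_t$, see Definition~\ref{def:switching-times}), re-commit to $\sigma(x^p_{t_i},x^*_{t_i})$ and execute it until the next switch. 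Causality is respected, since $\sigma(x^p_{t_i},x^*_{t_i})$ is defined for all future times and $t_{i+1}$ never needs to be anticipated; and Assumption~\ref{ass:meth-inescapable} keeps $x^p_{t_i}$ in $X$, so $\sigma$ is always applicable and $A_p$ is only ever evaluated on legal pairs. On each interval $[t_i,\min(t_{i+1},t)]$ the applied control is then the restriction of an element of $U^*_\bullet(x^p_{t_i},x^*_{t_i})$, which is exactly the CTPS property of Definition~\ref{def:optimal-system}.

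What remains is to realise this schedule in the prescribed form (\ref{eq:contr-form})--(\ref{eq:contr-mem-ode}) using only the delayed observation $\tilde x^p_{t-D}$. I take the controller memory to be a pair $x^c_t=(p_t,q_t)$: the sliding window $p_t(\alpha)=u_{t-\alpha}$ of the controller's own past outputs, for $\alpha\le\min(t,D)$ (the rest of the window being set arbitrarily by $D_c$ and never used), together with the as-yet-unused tail $q_t$ of the currently committed control, where $D_c$ sets $q_0=\sigma(x^p_0,x^*_0)$. Their dynamics (\ref{eq:contr-mem-ode}) are the natural ones: $p$ shifts with boundary value $p_t(0)=u_t$; $q$ is consumed from the front, $\dot{q}_t(s)=\partial_s q_t(s)$; the output is $u_t=B_c(\cdots)=q_t(0)$; and at each switch $q$ is re-initialised by an impulsive term driven by the Dirac in $z_t$. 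That re-initialisation needs $x^p_{t_i}$, which is not observed, and here the forward map enters:
\[
F(x^c_t,\tilde x^p_{t-D}) \;\triangleq\; y_t,\qquad \dot{y}_s = A_p\bigl(y_s,\,p_t(t-s)\bigr)\ \text{ on }\ [(t-D)_+,\,t],\qquad y_{(t-D)_+}=\tilde x^p_{t-D}.
\]
Because the true trajectory $s\mapsto x^p_s$ on $[(t-D)_+,t]$ solves the \emph{same} initial-value problem, with the same initial datum $x^p_{(t-D)_+}$ and the same control history (as $p_t(t-s)=u_s$ there), determinism of the plant gives $y_t=x^p_t$; that is, $F(x^c_t,\tilde x^p_{t-D})=x^p_t$ exactly, in particular at every switch time (so the controller so built in fact already possesses a forward model in the sense of Section~\ref{sec:meth-Problem-definition}). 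The delayed controller therefore re-commits to precisely the $\sigma(x^p_{t_i},x^*_{t_i})$ described above, so the closed-loop system is CTPS, which is what we had to show.

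The reconstruction itself is immediate; the real work lies in two places. The crucial one is the step $y_t=x^p_t$: it is valid only if the plant initial-value problem has a \emph{unique} solution for each piecewise-continuous control history, which is precisely what ``deterministic plant'' must be taken to mean, and is the only point at which that hypothesis is used --- without uniqueness the simulated and the true trajectories could diverge and the construction would collapse. The second is bookkeeping: one must check that the ``past outputs plus committed-control tail'' memory genuinely conforms to the forms (\ref{eq:contr-form})--(\ref{eq:contr-mem-ode}), expressing the shift, the consumption, and the $z_t$-triggered reset as an admissible $A_c$; and one must treat the initial regime $t<D$, in which only $x^p_0$ is observed, so that $F$ integrates over $[0,t]$ only and the window is correspondingly short.
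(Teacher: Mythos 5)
Your proposal is correct and follows essentially the same route as the paper: store the applied control history in the controller memory, re-integrate the deterministic plant from the delayed observation $\tilde x^p_{t-D}$ over the stored window to recover $x^p_t$ exactly (the paper's $FM_{\tilde D}$ is your $F$, including the same $t<D$ truncation), and at each switch commit to an element of the task-solving set from the reconstructed state. The only cosmetic difference is that you split the memory into a past-output window and a future committed tail, whereas the paper stores a single time-stamped schedule $[t,u^*_\alpha]$ covering both.
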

\begin{proof}
Define $FM_{D}(x_{0}^{p},\underline{u}_{[0,D]})$ to be the solution
of the dynamics of the plant at time $D$, when the initial state
of the plant is $x_{0}^{p}$ and the control until time $D$ is $\underline{u}_{[0,D]}$.
Now, define the state of the controller at time $t$ to be \begin{equation}
x_{t}^{c}(\alpha)=[t,u_{\alpha}^{*}]^{\top}\in\mathbb{R}^{m+1}\,,\label{eq:meth-build-contr-mem}\end{equation}
where $u_{\alpha}^{*}$ is the correct control at time $\alpha$ for
performing the task. Note the $u_{\alpha}^{*}$ can be defined even
for $\alpha>t$ assuming that $x_{r}^{*}=x_{t}^{*}$ for $t<r<\alpha$
($x^{*}$ does not change). In (\ref{eq:meth-build-contr-mem}) we
separate the first component of the control state (representing time)
from the other components, and use $x_{t,1}^{c}(\alpha)$ for the
former and $x_{t,2}^{c}(\alpha)$ for the remaining $m$-dimensional
sub-vector consisting of $u_{\alpha}^{*}$. We also define a projection
of $x_{t}^{c}$ on an interval $[a,b]$ to be $x_{t}^{c}[a,b]$. The
state $x_{t}^{c}$ defined in (\ref{eq:meth-build-contr-mem}) is
obtained by the dynamics\[
\dot{x}_{t}^{c}(\alpha)=\left[\begin{matrix}1\\
I_{\alpha\geq x_{t,1}^{c}}(u_{\alpha}^{*}-x_{t,2}^{c}(\alpha))z_{t}\end{matrix}\right]\,,\]
where we recall the definition \ref{def:switching-times} of the switching
sequence $\left\{ z_{t}\right\} $ in terms of an impulse train. The
future control is selected from the correct solution set of controls.
More formally, defining $\tilde{D}=\min[x_{t,1}^{c},D]$, we set $\hat{x}_{t}^{p}=FM_{\widetilde{D}}\left(x_{t-\widetilde{D}}^{p},x_{t,2}^{c}[x_{t,1}^{c}-\widetilde{D},x_{t,1}^{c}]\right)$
and $\underline{u}_{[t,\infty)}^{*}\in U_{\infty}^{*}\left(\hat{x}_{t}^{p},x_{t}^{*}\right)$.
In other words $\underline{u}_{[t,\infty)}^{*}$ is chosen from the
correct solution set $U_{\infty}^{*}(\hat{x}_{t}^{p},x_{t}^{*})$
where $\hat{x}_{t}^{p}=x_{t}^{p}$ is the exact prediction of the
current state using the forward model $FM$. For such a definition
of the memory, the control can be chosen by \begin{eqnarray*}
u_{t} & = & B^{c}(x_{t}^{*},z_{t},x_{t}^{c},\tilde{x}_{t-D})\\
 & = & x_{t,2}^{c}\left(x_{t,1}^{c}\right)\,.\end{eqnarray*}
It is obvious that the control between task switches is chosen so
that $u_{[t_{i}^{},t_{i+1}^{}]}$ belongs to the set $U_{t_{i+1}-t_{i}}(x_{t_{i}^{}}^{p},x_{t_{i}^{}}^{*})$
for each $i$, and therefore it is CTPS.
\end{proof}
Next we introduce a property whereby two states may be {}``united''
in terms of the solution to tasks, and therefore cannot be distinguished.
For such states, for each task, there exists a continuous control
such that controls at time $0$ are equal. The absence of such pairs
will enable us to guarantee the existence of a forward model in a
controller.
\begin{defn}
\label{def:A-pair-of}For a problem where assumption \ref{ass:meth-exist-sol}
holds, a pair of distinct states $x^{p}$and $x^{\prime p}$, $x^{p}\neq x^{\prime p}$,
is called a\emph{ Non Separable by Correct Task Performing }pair (NSCTP)
if for all $x^{*}$, and $0<\epsilon<\min\{\epsilon_{0}(x^{p},x^{*}),\epsilon_{0}(x^{\prime p},x^{*})\}$,
there exist controls $u\in\tilde{U}_{\epsilon}^{*}(x^{p},x^{*})$
and $u^{\prime}\in\tilde{U}_{\epsilon}^{*}(x^{\prime p},x^{*})$ such
that $u_{0}=u_{0}^{\prime}$.
\end{defn}
The following theorem constitutes the main theoretical result in the
paper. It provides sufficient conditions for the existence of a forward
model in delayed state feedback control. A required condition is the
absence of NSCTP pairs in the system.
\begin{thm}
\label{thm:meth-have-fm-det}Let $A_{p}$ be a deterministic plant
as in (\ref{eq:plant_desc}), and assume that NSCTP pairs of states
are absent from the system. Let $(x^{c},B_{c})$ be a controller with
delayed state feedback which renders the system CTPS. Then, under
assumptions \ref{ass:meth-exist-sol}, \ref{ass:meth-inescapable}
and \ref{ass:meth-piecewise-cont}, there exists a forward model $F$
such that for each $t,$ any initial condition $x_{0}^{p}$, and history
of tasks $(\underline{x}_{t}^{*},\underline{z}_{t})$, \[
x_{t}^{p}=F(x_{t}^{c},\tilde{x}_{t-D}^{p})\,.\]
\end{thm}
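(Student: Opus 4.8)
The plan is to argue by contradiction: suppose the controller does \emph{not} possess a forward model, and derive the existence of an NSCTP pair of states, contradicting the hypothesis. First I would unpack what the absence of a forward model means. The controller's action at time $t$ is determined by $(x_t^*, z_t, x_t^c, \tilde x_{t-D}^p)$ via $B_c$, and since a new task can be specified at any time independently of $x_t^p$, the relevant information for pinning down $x_t^p$ is just the pair $(x_t^c, \tilde x_{t-D}^p)$. If no transformation $F$ with $x_t^p = F(x_t^c, \tilde x_{t-D}^p)$ exists, then there must be two trajectories of the overall system --- two initial conditions $x_0^p$, $x_0'^p$, two times $t_1$, $t_2$, and two task/switching histories --- producing \emph{equal} memory-and-delayed-observation pairs, $(x_{t_1}^c, \tilde x_{t_1-D}^p) = (x_{t_2}^c, \tilde x_{t_2-D}^p)$, yet $x_{t_1}^p = s^1 \ne s^2 = x_{t_2}^p$. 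I would take care here to note that this is exactly the negation of ``$F$ is well defined as a function''.

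Next I would show $s^1, s^2$ form an NSCTP pair. Fix an arbitrary task $x^* \in X^*$ and an $\epsilon$ with $0 < \epsilon < \min\{\epsilon_0(s^1, x^*), \epsilon_0(s^2, x^*)\}$. At times $t_1$ and $t_2$ I specify that a new task is initiated with the \emph{same} task value and switching event, i.e. $(x_{t_1}^*, z_{t_1}) = (x_{t_2}^*, z_{t_2}) = (x^*, \text{switch})$. By the control law (\ref{eq:contr-form}), since $B_c$ sees identical arguments in both trajectories, it outputs identical controls: $u_{t_1} = u_{t_2}$. More generally, because the memory dynamics (\ref{eq:contr-mem-ode}) and the delayed observation evolve by the same equations fed the same inputs, the two trajectories continue to agree on $(x_t^c, \text{control})$ for a short time after $t_1$, $t_2$ respectively --- so the \emph{entire local control signal} $\underline u_{[t_1, t_1+\epsilon]}$ equals $\underline u_{[t_2, t_2+\epsilon]}$ (after the obvious time shift). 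Because the system is CTPS (definition \ref{def:optimal-system}), each of these local controls lies in the corresponding task-solving set: shifting time so the task starts at $0$, the common control $u$ satisfies $u \in U_\epsilon^*(s^1, x^*)$ and $u \in U_\epsilon^*(s^2, x^*)$. I would then invoke assumption \ref{ass:meth-exist-sol} / the remark following it to pass to a continuous task-solving control, or alternatively use assumption \ref{ass:meth-piecewise-cont} (piecewise continuous, continuous from the right) to extract a common value $u_0 = u_0'$ at time $0$, possibly after shrinking $\epsilon$ so the control is continuous on $[0,\epsilon]$. Either way we obtain $u \in \tilde U_\epsilon^*(s^1, x^*)$, $u' \in \tilde U_\epsilon^*(s^2, x^*)$ with $u_0 = u_0'$. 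Since $x^*$ was arbitrary, $(s^1, s^2)$ is NSCTP --- contradiction. Hence $F$ exists.

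The main obstacle I anticipate is the bookkeeping needed to legitimately ``graft'' a new task onto an existing trajectory at times $t_1$ and $t_2$ and to argue the two continuations genuinely coincide on a common interval. One must check that the reachability/realizability of the equal pair $(x_{t_1}^c, \tilde x_{t_1-D}^p) = (x_{t_2}^c, \tilde x_{t_2-D}^p)$ comes with admissible histories that can be extended by an arbitrary fresh task (this uses that tasks are specified independently of the plant state, and inescapability, assumption \ref{ass:meth-inescapable}, to keep everything in $X$), and that the controller's output depends on the past \emph{only} through $(x_t^*, z_t, x_t^c, \tilde x_{t-D}^p)$ --- so that equal current data forces equal future control on the interval where the delayed observation itself stays determined by data already equal. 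A secondary subtlety is the continuity upgrade from piecewise-continuous task-solving controls to elements of $\tilde U_\epsilon^*$; this is where the remark after assumption \ref{ass:meth-exist-sol} guaranteeing $\tilde U_{\epsilon_0}^* \ne \emptyset$ does the work, together with right-continuity of $u_t$ from assumption \ref{ass:meth-piecewise-cont} to identify the shared value at $t=0$.
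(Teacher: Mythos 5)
Your proposal is correct and follows essentially the same argument as the paper's proof: negate the existence of $F$ to obtain two trials with identical $(x_t^c,\tilde{x}_{t-D}^p)$ but distinct plant states, graft an identical fresh task at $t_1$ and $t_2$ so that (\ref{eq:contr-form}) forces $u_{t_1}=u_{t_2}$, use right-continuity and CTPS to place the restricted controls in $\tilde{U}_{\epsilon}^{*}$, and conclude the two states form an NSCTP pair, contradicting the hypothesis. Your additional observation that the two continuations agree on a whole interval is more than the NSCTP definition requires (only equality of the controls at the switching instant is needed), but it is harmless and the rest matches the paper.
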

\begin{proof}
Assume that the system is CTPS and assume by negation that such a
forward model $F$ does not exist. Therefore there exist times $t_{1}$
and $t_{2}$, controller states $x_{t_{1}}^{c}(\cdot)=x_{t_{2}}^{c}(\cdot)$,
and plant states $\tilde{x}_{t_{1}-D}^{p}=\tilde{x}_{t_{2}-D}^{p}$
such that for some two trials $(x_{0}^{p},\underline{x}_{t_{1}}^{*},\underline{z}_{t_{1}},t_{1})$
and $(x_{0}^{\prime p},\underline{x}_{t_{2}}^{\prime*},\underline{z}_{t_{2}},t_{2})$
we get $x_{t_{1}}^{p}\neq x_{t_{2}}^{\prime p}$. But the controller
of the form (\ref{eq:contr-form}) chooses the action by the rule
$u_{t}=B^{c}(x_{t}^{*},x_{t}^{c},z_{t},\tilde{x}_{t-D}^{p})$. Therefore,
for each new task $x^{*}$ set at times $t_{1}$ and $t_{2}$ for
the two trials (since the system is inescapable and a solution always
exists), we have $u_{t_{1}}=u_{t_{2}}$ and since $u_{t}$ is continuous
from the right and piecewise continuous, there exist $\epsilon_{0}>0$
such that $u_{t_{i}}$ are continuous on $[t_{i},t_{i}+\epsilon_{i}]$
for $i\in\{1,2\}$. Thus, from the assumption that the system is CTPS,
it follows that for all $x^{*}$ and $0<\epsilon<\min(\epsilon_{0},\epsilon_{1}(x_{t_{1}}^{p},x^{*}),\epsilon_{2}(x_{t_{2}}^{p},x^{*}))$,
$u_{[t_{1},t_{1}+\epsilon]}\in\tilde{U}_{\epsilon}^{*}(x_{t_{1}}^{p},x_{t}^{*})$
and $u_{[t_{2},t_{2}+\epsilon]}\in\tilde{U}_{\epsilon}^{*}(x_{t_{2}}^{p},x_{t}^{*})$
(it suffices to look at the continuous solutions since we know that
the control signal is piecewise continuous for the {}``black box
controller''). But this means that the pair of distinct states $x_{t_{1}}^{p}$
and $x_{t_{2}}^{\prime p}$ is NSCTP which leads to a contradiction
with the assumption that no such states exist.
\end{proof}

\subsection{\label{sub:Linear-time-optimal}Linear time optimal control}

We consider two examples demonstrating the general claims established
in section \ref{sub:meth-General-Results}. We begin in the present
section by considering a linear control problem where the task is
defined as \emph{optimal setpoint tracking, }introduced in section\emph{
\ref{sub:res-Linear-time-optimal}.} The objective here is to minimize
the time required to reach the desired state with linear dynamics
and delayed observations. The formal task is described in definition
\ref{def:meth-optimal-setpoint-tracking}. In order to simplify the
notation, we will omit the superscript $p$ from $x^{p}$ in this
section. Some background results required in this section, and alluded
to below, are taken from \cite{hermes69}.
\begin{defn}
\label{def:meth-optimal-setpoint-tracking} Let $X^{*}=X$ and $x^{*}\in X^{*}$.
The task is an \emph{optimal setpoint tracking} task when \[
U_{t}(x,x^{*})=\argmin_{u,\tau|x_{\tau}=x^{*}}J(x,u,\tau),\]
namely, the controller must take the plant state from the initial
state $x$ to the desired state $x^{*}$ while minimizing the cost
function $J$.
\end{defn}
The time optimal cost function and the dynamics are given in (\ref{eq:time-opt-cost})
and (\ref{eq:lin-dyn}) respectively. Let $u_{[0,t]}$ be a given
control law. Then it is well known that \begin{equation}
x_{t}=X_{t}x^{0}+X_{t}\int_{0}^{t}X_{s}^{-1}Nu_{s}ds,\label{eq:ode_sol}\end{equation}
where the matrix $X_{t}$ is the solution of the system,\[
\dot{X_{t}}=MX_{t}\quad\mathrm{with}\quad X_{0}=I,\]
which can be written explicitly as $X_{t}=e^{tM}$.

The existence of a forward model in this case will be demonstrated
under the following assumptions that are needed to prove the absence
of NSCTP pairs of states, and to fulfill the assumptions of theorem
\ref{thm:meth-have-fm-det}.
\begin{assumption}
\textup{\label{ass:X-is-contr}}The system is essentially normal (as
defined on p. 65 in \cite{hermes69}). The term {}``essentially''
implies that a property holds almost everywhere - except on a set
with measure zero. For simplicity, the term {}``essentially'' will
be omitted from now on in the context of normal systems. The set $X$
is a controllable and inescapable set (see Section \ref{sub:Problem-definition}).
\end{assumption}
The general definition of a normal system is somewhat intricate. However,
for a time independent linear system of the form (\ref{eq:lin-dyn}),
theorem 16.1 in \cite{hermes69} establishes that the system is normal
if and only if for each $j=1,\ldots,m$ the vectors \textcolor{red}{${\color{red}{\color{black}N,MN^{j},\ldots,M^{n-1}N^{j}}}$
}are linearly independent, where $N^{j}$ are the column vectors of
the matrix $N$. The exact conditions on the matrices $M$ and $N$
needed for the set $X$ to be controllable and inescapable require
further analysis. However, a condition such as stability of $M$ insures
the existence of a set $X$ with $0\in X$, that will be both controllable
and inescapable. \\

As stated above, the main results in the present section rely heavily
on basic concepts and theorems from \cite{hermes69}. For ease of
reference, we recall some basic notions.
\begin{defn}
Let $K(t,x^{0})$ be the \emph{accessible set }at time $t$, starting
from $x^{0},$namely \[
K(t,x^{0})\triangleq\{x:\mbox{\ensuremath{\,} there exists }u\mbox{ which steers from }x^{0}\mbox{ to }x\mbox{ at time \mbox{t}\}.}\]

\end{defn}
The following two key observations about normal systems are taken
from \cite{hermes69}.
\begin{itemize}
\item For a normal system, $K(t,x^{0})$ is strictly convex, bounded and
closed.
\item For normal systems, an optimal control law always exists, is unique
and is essentially determined by $u_{t}^{*}=\mathrm{sgn}(\eta^{T}X_{\tau^{*}}X_{t}^{-1}N)$
for $x^{0},x^{*}\in X$ , where $\eta$ is an outward normal to $K(t,x^{0})$
at $x^{*}$, and the trajectory $x_{[0,t]}^{*}$ is unique.
\end{itemize}
We begin by proving a basic lemma that establishes some properties
that are required in order to show that the system does not possess
NSCTP pairs of states. The lemma establishes geometric properties
of two intersecting accessible sets. A sketch of the ideas underlying
the lemma is presented in Figure \ref{fig:Accessible-sets-meet}.
\begin{lem}
\label{lem:find-x-star}Let $x^{1},x^{2}\in X$ and $x^{1}\neq x^{2}$,
and define $\tau^{m}\triangleq\sup\left\{ \tau:\; K(\tau,x^{1})\cap K(\tau,x^{2})=\emptyset\right\} $.
Then under assumption \ref{ass:X-is-contr}:\end{lem}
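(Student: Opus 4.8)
The plan is to turn the geometry of the two accessible sets into a single setpoint task that the two states cannot answer in the same way: I would exhibit a target $x^*$ whose time-optimal trajectories from $x^1$ and from $x^2$ begin with \emph{opposite} bang controls, so that $(x^1,x^2)$ cannot be an NSCTP pair, which is exactly the hypothesis needed to invoke Theorem~\ref{thm:meth-have-fm-det}. Throughout I would lean on the standard facts about normal systems recalled from \cite{hermes69}: accessible sets are strictly convex, compact and vary continuously (in the Hausdorff metric) with $t$, and the minimum-time control to a point $x^*$ is the bang-bang law $u_t^{*}=\mathrm{sgn}\bigl(\eta^{\top}X_{\tau^{*}}X_t^{-1}N\bigr)$, where $\eta$ is the (unique, by strict convexity) outward normal to the relevant accessible set at $x^*$.

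First I would dispose of the elementary bounds on $\tau^m$. Since $K(0,x^1)=\{x^1\}$ and $K(0,x^2)=\{x^2\}$ are disjoint and $\tau\mapsto\mathrm{dist}\bigl(K(\tau,x^1),K(\tau,x^2)\bigr)$ is continuous, the sets stay disjoint for small $\tau$, so $\tau^m>0$. For finiteness I would use controllability (Assumption~\ref{ass:X-is-contr}) together with the equilibrium at the origin (note $0\in U=[-1,1]^m$, so $u\equiv 0$ holds the state at $0$): steer $x^1\to 0$ and $x^2\to 0$ and then wait at $0$, so that $0$ lies in both accessible sets at a common finite time $T$, whence $\tau^m\le T<\infty$.

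Next, the geometry at $\tau=\tau^m$. Since the set-distance is continuous, vanishes for every $\tau>\tau^m$, and stays positive along a sequence $\tau_k\uparrow\tau^m$, it vanishes at $\tau^m$: the two accessible sets touch. They cannot overlap in interior at $\tau^m$, since interior common points persist under small changes of $\tau$ (using that $B(p,r)\subseteq A_k+B(0,\epsilon_k)$ with $A_k$ convex forces $B(p,r-\epsilon_k)\subseteq A_k$), contradicting the existence of disjoint times arbitrarily close to $\tau^m$ from below. Two compact convex bodies that touch without interior overlap, one of them strictly convex, meet in a single point $x^*$; it lies on the boundary of each set, the two outward normals there are antipodal, and $x^*\in X$ by inescapability (so it is a legal task, as $X^{*}=X$). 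It remains to check that $\tau^m$ is the minimum time to steer $x^1$ to $x^*$ and also the minimum time to steer $x^2$ to $x^*$. Here I would pass to the monotone picture — the sets of states controllable to $x^*$ in time $\le t$, which are nested by construction and, for a normal system, strictly convex, compact and continuous in $t$ — so that ``$x^*\in\partial K(\tau^m,x^i)$'' translates into ``the minimum time from $x^i$ to $x^*$ equals $\tau^m$''; if it were strictly smaller for, say, $x^1$, then combining with reachability of $x^*$ from $x^2$ and monotonicity would place $x^*$ in both sets at a time strictly below $\tau^m$, contradicting the definition of $\tau^m$.

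With the antipodal normals $\eta^1=-\eta^2$ at the common boundary point $x^*$, the bang-bang formula gives $u^{1*}_t=\mathrm{sgn}\bigl(\eta^{1\top}X_{\tau^m}X_t^{-1}N\bigr)=-u^{2*}_t$ for almost every $t$; normality makes the switching times isolated, so on an initial interval $(0,\epsilon)$ the two optimal controls are constant, equal to $\pm$ a fixed vector in $\{-1,1\}^m$, hence unequal. Combined with uniqueness of the optimal control for a normal system, this shows $\tilde U^{*}_\epsilon(x^1,x^*)$ and $\tilde U^{*}_\epsilon(x^2,x^*)$ contain no pair of controls agreeing at time $0$, i.e.\ $(x^1,x^2)$ is not NSCTP. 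Since this holds for every $x^1\neq x^2$ in $X$, Theorem~\ref{thm:meth-have-fm-det} then yields the forward model (Theorem~\ref{thm:lin-time-opt-have-fm}). I expect the main obstacle to be precisely the accessible-set bookkeeping of the third paragraph: retaining strict convexity while also securing monotonicity (attainable sets from a moving initial point need not be nested, which forces the detour through the dual controllable sets), proving the continuity statements carefully, and — most delicately — the synchronization that makes the two antipodal normals live at the \emph{same} critical time $\tau^m$, without which the formula need not deliver opposite initial controls.
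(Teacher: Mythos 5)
Your argument arrives at the same geometric picture the paper uses --- the two strictly convex accessible sets first touch at a single point, and a separating hyperplane there supplies the antipodal outward normals --- so part 3 is essentially identical; the differences are in how parts 1 and 2 are executed. For finiteness the paper argues by contradiction from controllability and the existence of time-optimal controls (if $\tau^{m}=\infty$ some point of $X$ would never be reachable from $x^{1}$), whereas you steer both states to the origin and park there; this is cleaner but quietly assumes $0\in X$, which Assumption \ref{ass:X-is-contr} does not literally grant (the paper only remarks that stability of $M$ would yield such a set). For part 2 the paper establishes non-emptiness by strictly separating the two sets and comparing first hitting times of the separating slab, and establishes empty interior by invoking monotonicity of $K(\cdot,x^{i})$ in $\tau$ together with existence of optimal controls; you replace both steps with Hausdorff-continuity of $\tau\mapsto K(\tau,x^{i})$ and the persistence of interior points of convex bodies under small Hausdorff perturbations. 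Your route is more self-contained topologically and, notably, avoids the monotonicity of accessible sets that the paper asserts without proof; the paper's route stays closer to the optimal-control machinery of \cite{hermes69}. Two small caveats: concluding that the contact set is a singleton needs strict convexity of \emph{both} bodies or the separating hyperplane you mention (one strictly convex body alone, without the hyperplane, is not quite enough as phrased); and your third paragraph --- showing $\tau^{m}$ is the minimal time to $x^{*}$ from each $x^{i}$ so that the normals can be fed into the bang-bang formula --- really belongs to Theorem \ref{thm:lin-time-opt-have-fm} rather than to this lemma, though you are right to flag that synchronization step as the delicate point, one the paper passes over silently.
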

\begin{enumerate}
\item $\tau^{m}<\infty$.
\item $K(\tau^{m},x^{1})\cap K(\tau^{m},x^{2})=\{x^{*}\}$.
\item There exists an outward normal $g$ to a supporting hyperplane to
$K(\tau^{m},x^{1})$ at $x^{*}$ and $-g$ is an outward normal to
a supporting hyperplane to $K(\tau^{m},x^{2})$ at $x^{*}$.

\begin{center}
Figure 3 about here
\end{center}

\end{enumerate}
\begin{proof}
For a normal system there exists an optimal control, namely for all
$x^{0},x^{*}\in X$ there exists a $\tau$ (might be infinity) such
that $x^{*}\in\partial K(\tau,x^{0})$ (by Theorems 14.1, 14.2, 15.1
and Corollary 15.1 in \cite{hermes69}). Define $L\triangleq K(\tau^{m},x^{1})\cap K(\tau^{m},x^{2})$.
\\
\emph{}\\
\emph{Proof of 1:} Assume by negation that $\tau^{m}=\infty$.
We know also that $X=\lim_{\tau\rightarrow\infty}K(\tau,x^{1})=\lim_{\tau\rightarrow\infty}K(\tau,x^{2})$
from assumption \ref{ass:X-is-contr}. From the definition of $\tau^{m}$,
under assumption that $\tau^{m}=\infty$ there must exist $x^{3}\in X^{\circ}$
such that for all $\tau<\infty$, $x^{3}\notin K(\tau,x^{1})$ (without
loss of generality). If $\tau^{m}=\infty$, then there is no optimal
control from $x^{1}$ to $x^{3}$ which contradicts the existence
of time optimal solution. Therefore $\tau^{m}<\infty$.\\
\\
\emph{Proof of 2: }First let us show that $L\neq\emptyset$. Assume
by negation that $K(\tau^{m},x^{1})\cap K(\tau^{m},x^{2})=\emptyset$.
Since $K$ is closed, strictly convex and compact (from Lemma 12.1,
Corollary 15.1 in \cite{hermes69} and $\tau^{m}<\infty$), the sets
$K(\tau^{m},x^{1}),K(\tau^{m},x^{2})$ are strictly separable by a
hyperplane $f(x)=a\cdot x+b$ i.e., there exists $\epsilon>0$ such
that for all $y\in K(\tau^{m},x^{1})$ $f(y)<-\epsilon$, and for
all $z\in K(\tau^{m},x^{2})$, $f(z)>\epsilon$ by Proposition 2.4.3
\cite{bertsekas_opt}. Define $\tau^{1}=\inf\{\tau:\; K(\tau,x^{1})\cap\{x:f(x)=-\epsilon\}\neq\emptyset\}$
and $\tau^{2}=\inf\{\tau:\; K(\tau,x^{2})\cap\{x:f(x)=\epsilon\}\neq\emptyset\}$.
Notice that $K(\tau^{1},x^{1})\cap\{x:f(x)=-\epsilon\}$ contains
at most a single point since $K$ is closed, strictly convex and an
optimal control always exists. The same argument applies to $K(\tau^{2},x^{2})\cap\{x:f(x)=+\epsilon\}$.
Therefore $K(\tau^{1},x^{1})\cap K(\tau^{2},x^{2})=\emptyset.$ It
follows that also for $\tau^{0}=\min(\tau^{1},\tau^{2})$, we have
that $K(\tau^{0},x^{1})\cap K(\tau^{0},x^{2})=\emptyset$. But $\tau^{0}>\tau^{m}$,
and this contradicts the definition of $\tau^{m}$, therefore $L\neq\emptyset$.\\
Next we show that $L^{\circ}=\emptyset$ . Assume by negation that
it is not and let $x\in L^{\circ}$. Therefore $x\in K^{\circ}(\tau^{m},x^{1})$
and $x\in K^{\circ}(\tau^{m},x^{2})$, but from the definition of
$\tau^{m}$ for all $\epsilon>0$, $x\notin K(\tau^{*}-\epsilon,x^{1})$
or $x\notin K(\tau^{m}-\epsilon,x^{2})$ (without the loss of generality
assume that $x\notin K(\tau^{m}-\epsilon,x^{1})$) and $K$ is monotonic
in $\tau$. Therefore for all $M>0$, $x\in K^{\circ}(\tau^{m}+M,x^{1})$
which leads to a contradiction that there is no optimal control from
$x^{1}$ to $x$ as should be by Theorem 15.1 and Corollary 15.1 in
\cite{hermes69}. Therefore $L^{\circ}=\emptyset$.\\
Let us show that $L$ cannot include more than a single point.
Since $L^{\circ}=\emptyset$ and $L$ is strictly convex, then if
$x,x^{\prime}\in L$ and $x\neq x^{\prime}$ then a convex combination
should be in $L$. But since $L$ is strictly convex , the convex
combination cannot be on $\partial L$ or in the interior of $L$
since it is empty. Therefore $L$ can contain only a single point.\\
Summarizing the above, $L$ is not empty and can contain only a
single point, therefore $L=\{x^{*}\}$.\\
\\
\emph{Proof of 3:} Define $K_{1}\triangleq K(\tau^{m},x^{1})$
and $K_{2}\triangleq K(\tau^{m},x^{2})$. Since $K_{1}^{\circ}\cap K_{2}^{\circ}=\emptyset$
and $K_{1},K_{2}$ are convex, we can use the separating theorem for
$K_{1}^{\circ},K_{2}^{\circ}$ (Proposition 2.4.2 \cite{bertsekas_opt}).
Thus there exists a $g\in\mathbb{R}^{n}$, $g\neq0$, such that for
all $x\in K_{1}^{\circ}$, $x^{\prime}\in K_{2}^{\circ}$, $g\cdot x\leq g\cdot x^{\prime}$.
Now let $x_{n}\in K_{2}^{\circ}$ such that $x_{n}\rightarrow x^{*}$
thus $g\cdot(x-x_{n})\leq0$ and therefore $g\cdot(x-x^{*})\leq0$.
Similarly, we find that $g\cdot(x^{\prime}-x^{\ *})\geq0$. Since
the functional $(g\cdot x)$ is continuous, the same is correct for
$y\in K_{1},y^{\prime}\in K_{2}$, i.e $g\cdot(y-x^{*})\leq0$ and
$g\cdot(y^{\prime}-x^{\ *})\geq0$. Thus $g,-g$ are outward normals
to supporting hyperplanes $K_{1},K_{2}$ respectively.
\end{proof}
Using lemma \ref{lem:find-x-star} we will establish that the system
does not possess NSCTP pairs of states, and thus the need for a forward
model will follow from theorem \ref{thm:meth-have-fm-det}.
\begin{thm}
\label{thm:lin-time-opt-have-fm}Consider a linear normal system described
by (\ref{eq:lin-dyn}), and assume that a controller with delayed
input renders the system CTPS for an optimal setpoint tracking task,
where the cost function is given by (\ref{eq:time-opt-cost}). If
$x_{t}^{c}$ is the memory state of the controller, and assumptions
\ref{ass:meth-piecewise-cont} and \ref{ass:X-is-contr} hold, then
there exists a forward model $F$ such that for each $t,$ any initial
condition $x_{0}^{p}$, and history of tasks $(\underline{x}_{t}^{*},\underline{z}_{t})$,
\[
x_{t}^{p}=F(x_{t}^{c},\tilde{x}_{t-D}).\]
\end{thm}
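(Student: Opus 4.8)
The plan is to invoke Theorem~\ref{thm:meth-have-fm-det}, whose only nontrivial hypothesis (given the standing assumptions and the CTPS hypothesis already in the statement) is the \emph{absence of NSCTP pairs of states}. So the entire proof reduces to the following claim: for a linear normal system with cost (\ref{eq:time-opt-cost}), no pair of distinct states $x^1 \neq x^2$ in $X$ is NSCTP. Once this is shown, the existence of the forward model $F$ follows immediately, since assumptions \ref{ass:meth-exist-sol} and \ref{ass:meth-inescapable} are supplied by assumption \ref{ass:X-is-contr} (controllability/inescapability gives the nonempty solving sets via normality and the existence of optimal controls, plus inescapability), and assumption \ref{ass:meth-piecewise-cont} is assumed here.

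To prove the claim I would argue by contradiction: suppose $x^1 \neq x^2$ form an NSCTP pair. Apply Lemma~\ref{lem:find-x-star} to get $\tau^m < \infty$, a unique intersection point $\{x^*\} = K(\tau^m,x^1) \cap K(\tau^m,x^2)$, and an outward normal $g$ to a supporting hyperplane of $K(\tau^m,x^1)$ at $x^*$ with $-g$ an outward normal to a supporting hyperplane of $K(\tau^m,x^2)$ at $x^*$. Now take the task $x^* \in X^* = X$ (reach this very point in minimum time). Since $x^*$ lies on $\partial K(\tau^m, x^i)$ and accessible sets are strictly convex, the minimum time to reach $x^*$ from $x^i$ is exactly $\tau^m$ for both $i=1,2$, and the optimal control from $x^i$ is unique and given by the bang-bang formula $u_t^{i*} = \mathrm{sgn}\big((\eta^i)^\top X_{\tau^m} X_t^{-1} N\big)$, where $\eta^i$ is the outward normal to $K(\tau^m,x^i)$ at $x^*$. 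By the last part of Lemma~\ref{lem:find-x-star} we may take $\eta^1 = g$ and $\eta^2 = -g$. Evaluating at $t=0$ (using $X_0 = I$), the two optimal controls at time $0$ are $u^{1*}_0 = \mathrm{sgn}(g^\top X_{\tau^m} N)$ and $u^{2*}_0 = \mathrm{sgn}(-g^\top X_{\tau^m} N) = -\mathrm{sgn}(g^\top X_{\tau^m} N)$ (componentwise). So the unique optimal controls at time $0$ are antipodal: $u^{1*}_0 = -u^{2*}_0$.

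The remaining point is to convert this statement about \emph{the} optimal controls into a statement about \emph{all} continuous task-solving controls near $t=0$, i.e. about the sets $\tilde U_\epsilon^*(x^i, x^*)$. Because the system is normal, the optimal control and trajectory are \emph{unique}, so for small $\epsilon$ the only element of $U_\epsilon^*(x^i,x^*)$ — extended to an optimal solving control — agrees on $[0,\epsilon]$ with the bang-bang solution; hence every $u \in \tilde U_\epsilon^*(x^1,x^*)$ has $u_0 = u^{1*}_0$ and every $u' \in \tilde U_\epsilon^*(x^2,x^*)$ has $u'_0 = u^{2*}_0 = -u^{1*}_0$, so $u_0 \neq u'_0$ (here one must note $g^\top X_{\tau^m} N \neq 0$ componentwise, which is exactly where normality is used — it guarantees the sign function is well defined and the control is genuinely bang-bang, nonzero a.e.). This contradicts the NSCTP property, which demanded the existence of such controls with $u_0 = u'_0$. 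Therefore no NSCTP pair exists, and Theorem~\ref{thm:meth-have-fm-det} yields the forward model $F$.

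I expect the main obstacle to be the last step's bookkeeping: carefully relating ``$x^*$ on the boundary of the time-$\tau^m$ accessible set'' to ``$\tau^m$ is the minimum time,'' and handling the technical requirement that $\epsilon$ be smaller than $\epsilon_0(x^i,x^*)$ from Definition~\ref{def:A-pair-of} — one must make sure the continuous task-solving controls really do coincide with the bang-bang control on a genuine initial interval, which rests on uniqueness of the optimal trajectory for normal systems and the fact that any solving control must be time-optimal (so it \emph{is} the unique optimal control, up to the irrelevant tail). A secondary subtlety is the componentwise reading of the $\mathrm{sgn}$ formula and confirming that $g^\top X_{\tau^m}N$ has no zero components, which is precisely the normality hypothesis (Theorem 16.1 in \cite{hermes69}).
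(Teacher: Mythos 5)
Your proposal is correct and follows essentially the same route as the paper: reduce to the absence of NSCTP pairs, invoke Lemma~\ref{lem:find-x-star} to obtain the common boundary point $x^{*}$ with antipodal outward normals $g$ and $-g$, conclude from the bang-bang formula that the unique time-optimal controls satisfy $u_{0}^{1}=-u_{0}^{2}\neq0$, and then apply Theorem~\ref{thm:meth-have-fm-det}. The only difference is cosmetic (you phrase the NSCTP step as a contradiction and spell out the uniqueness bookkeeping slightly more explicitly than the paper does).
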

\begin{proof}
First notice that assumption \ref{ass:meth-piecewise-cont} holds
since we required the system to be controllable, and from Theorem
13.1 in \cite{hermes69}, the minimizer exists. Thus the task can
always be performed, and from the normality of the system it follows
that the time optimal control reaching $x^{*}$ is bang-bang, which
implies that $\tilde{U}_{\epsilon}$ is not empty. First we will show
that the system has no NSCTP pairs of states, and then use theorem
\ref{thm:meth-have-fm-det} to establish the existence of a forward
model.

For a normal system, the time optimal control reaching $x^{*}$ is
given by\[
u_{t}=\mathrm{sgn}(\eta^{T}X_{\tau^{*}}X_{t}^{-1}N),\]
where $\eta$ is an outward normal to a supporting hyperplane to $K(\tau^{*},x^{0})$
at $x^{*}$ (except on a set of measure 0). It is essentially unique
(may differ over a set of times with measure 0) by Theorems 14.1,
14.2, 15.1 and Corollary 15.1 in \cite{hermes69}. Now, let $x^{1}$
and $x^{2}$ be two distinct points in $X$, then by lemma \ref{lem:find-x-star}
there exists $x^{*}$ which is reachable from $x^{1}$ and $x^{2}$
in time $\tau^{*}=\tau^{m}$ (since $x^{*}\in\partial K(x^{1},\tau^{m})$
and $x^{*}\in\partial K(x^{2},\tau^{m})$) by time optimal control,
and there exist outward normals $\eta_{1}=g$ and $\eta_{2}=-g$.
Since $X_{t}$ does not depend on the initial conditions,\[
u_{t}^{1}=-u_{t}^{2}\neq0,\]
$u_{t}$ is piecewise continuous and $u_{0}^{1}\neq u_{0}^{2}$. Therefore
for an arbitrary $x^{1}$ and $x^{2}$ we have found $x^{*}$ such
that the solution is unique and $u_{0}^{1}\neq u_{0}^{2}$ . Thus
the system does not possess NSCTP pairs of states. We have shown that
all the assumptions required for theorem \ref{thm:meth-have-fm-det}
hold, and therefore there exists a forward model $F$ such that $x_{t}^{p}=F(x_{t}^{c},\tilde{x}_{t-D}).$
\end{proof}

\subsection{\label{sub:meth-Minimum-Jerk-Optimal}Minimum Jerk Optimal Control}

In this example, the plant's state, dynamics, control and cost functions
are given in (\ref{eq:min_jerk_sys}-\ref{eq:min_jerk_cost}). The
initial and terminal conditions are given in section \ref{sub:Minimum-Jerk-Optimal}.
The solution trajectory is given in (\ref{eq:min-jerk-explicit-sol}),
where the constants $a_{i}$ are found using the initial and boundary
conditions. Taking three derivatives of (\ref{eq:min-jerk-explicit-sol})
and setting $t=0$, we obtain\begin{eqnarray*}
\delta_{0} & = & \frac{60}{T^{3}}x_{T}-\frac{60}{T^{3}}x_{0}-\frac{36}{T^{2}}x_{d0}-\frac{9}{T}x_{dd0}.\end{eqnarray*}
First, notice that for a constant value of $T$, there exist NSCTP
pairs. Each $\underline{x}=\left(x_{0},x_{d0},x_{dd0}\right)^{\top}$
and $\underline{x}^{\prime}=\left(x_{0}^{\prime},x_{d0}^{\prime},x_{dd0}^{\prime}\right)^{\top}$
such that\[
-\frac{60}{T^{3}}x_{0}-\frac{36}{T^{2}}x_{d0}-\frac{9}{T}x_{dd0}=-\frac{60}{T^{3}}x_{0}^{\prime}-\frac{36}{T^{2}}x_{d0}^{\prime}-\frac{9}{T}x_{dd0}^{\prime}\]
are NSCTP pairs (there are infinitely many of these) since for each
$x_{T}$ the optimal control at time $0$ is\[
\delta_{0}(x_{T})=\delta_{0}^{\prime}(x_{T}^{}).\]
This result does not imply that in the present case a forward model
is not needed, but it does imply that a higher order condition may
be required in order to prove it.\\

Assume then that the terminal time $T$ can vary. For this case we
will prove in theorem \ref{thm:meth-min-jerk-have-fm} that a forward
model is essential. First we will show in Lemma \ref{lem:min_jerk_no_non_sep}
that the system does not have NSCTP pairs of states, and then use
theorem \ref{thm:meth-have-fm-det} to establish the claim.\\

\begin{defn}
\label{def:meth-optimal-setpoint-tracking-const-time} Let $X^{*}=\widetilde{X}\times$$\mathbb{R}$
, where $\tilde{X}\subseteq X$, and $x^{*}=\left(x^{p*},T\right)\in X^{*}$.
The task is an \emph{optimal setpoint tracking in constant time} task
when \[
U_{t}^{*}(x,x^{*})=\argmin_{u|x_{T}=x^{p*}}J(x,u,T),\]
namely, the controller must take the plant state from the initial
state $x$ to the desired state in time $T$, while minimizing the
cost function $J$.
\end{defn}
In the present case the subset $\tilde{X}$ is given by (\ref{eq:res-tildeX}).
\begin{lem}
\label{lem:min_jerk_no_non_sep}The system (\ref{eq:min_jerk_sys})
with dynamics (\ref{eq:min_jerk_dynamics}) solving an \emph{optimal
setpoint tracking in constant time} task, and minimizing the cost
function (\ref{eq:min_jerk_cost}) has no NSCTP pairs of states.\end{lem}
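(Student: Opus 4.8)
The plan is to use the closed form already displayed for the optimal initial control, together with the fact that the minimum jerk problem has a \emph{unique} minimizer, to show directly that every pair of distinct states can be separated by some task, so that no NSCTP pair (definition \ref{def:A-pair-of}) can exist.

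First I would record the uniqueness structure. Since the cost (\ref{eq:min_jerk_cost}) is a strictly convex quadratic functional over the linear dynamics (\ref{eq:min_jerk_dynamics}) with fixed initial and terminal data, the optimizer is unique and, coordinate-wise, is the quintic (\ref{eq:min-jerk-explicit-sol}) of \cite{flash84coordination}. Consequently, for any $\epsilon<T$ the continuous task solving control set $\tilde U_{\epsilon}^{*}(\underline{x}^{p},(x^{p*},T))$ is the singleton obtained by restricting $\underline{u}=(\dddot{x},\dddot{y})^{\top}$, computed from that quintic in each of the two (decoupled) coordinates, to $[0,\epsilon]$. In particular, if a pair $\underline{x}^{p}\neq\underline{x}'^{p}$ were NSCTP, then necessarily $\delta_{0}=\delta_{0}'$ and $\gamma_{0}=\gamma_{0}'$ for \emph{every} task $(x^{p*},T)$, where $\delta_{0}=\tfrac{60}{T^{3}}x_{T}-\tfrac{60}{T^{3}}x_{0}-\tfrac{36}{T^{2}}x_{d0}-\tfrac{9}{T}x_{dd0}$ is the expression obtained above by differentiating (\ref{eq:min-jerk-explicit-sol}) three times at $t=0$, and $\gamma_{0}$ is its $y$-analogue.

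Second, take an arbitrary pair of distinct states. Because the $x$- and $y$-subsystems decouple, at least one of the six coordinates of $\underline{x}^{p}$ and $\underline{x}'^{p}$ differs; assume without loss of generality that it is an $x$-coordinate. The $x_{T}$ term in $\delta_{0}$ is common to both states, so $\delta_{0}-\delta_{0}'$, viewed as a function of $T>0$, equals $-\tfrac{1}{T^{3}}\bigl(60(x_{0}-x_{0}')+36T(x_{d0}-x_{d0}')+9T^{2}(x_{dd0}-x_{dd0}')\bigr)$, a nonzero rational function of $T$ (the polynomial in the parenthesis is nonzero precisely because the triple $(x_{0},x_{d0},x_{dd0})$ differs from its primed counterpart). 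Hence $\delta_{0}\neq\delta_{0}'$ for all but finitely many $T>0$. Fix any such $T$ and any admissible target $x^{p*}\in\tilde{X}$; assumption \ref{ass:meth-exist-sol} guarantees the task is solvable, so $\epsilon_{0}>0$ exists. For this task the unique initial controls of the two states differ, so the pair is \emph{not} NSCTP. Since the pair was arbitrary, the system has no NSCTP pairs of states, proving the lemma.

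The only delicate point is the uniqueness claim that pins $\tilde U_{\epsilon}^{*}$ down to a single control: without it, a NSCTP pair could in principle be realized through suboptimal controls that merely happen to be task solving on $[0,\epsilon]$. This is exactly where strict convexity of the quadratic jerk cost is used, and it is standard; the remainder of the argument is just a finite-root count for a polynomial in $T$, so I expect no serious obstacle there.
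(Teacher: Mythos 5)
Your proposal is correct and follows essentially the same route as the paper's own proof: it invokes uniqueness of the minimum-jerk solution to reduce the NSCTP question to comparing the explicit initial jerk $\delta_{0}$ for the two states, observes that the $x_{T}$ term cancels in $\delta_{0}-\delta_{0}'$ leaving a quadratic in $T$ with at most two roots, and picks any other $T$ to separate the states. The only difference is presentational — you spell out the strict-convexity justification for uniqueness that the paper merely asserts — so no further changes are needed.
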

\begin{proof}
First, the control $(\delta,\gamma)$ is continuous, therefore $\tilde{U}_{T}^{*}(x_{t}^{p},x^{*})\subseteq U_{T}^{*}(x_{t}^{p},x^{*})$.
The solutions are unique, therefore we just have to find a task $x^{*}$
where the controls at time 0 are different for 2 initials states.

Let $\underline{x}_{0}\neq\underline{x}_{0}^{\prime}$ be two initial
states. Assume, without loss of generality, that the $x$ coordinate's
initial conditions are different in the two initial states, i.e.,
$\underline{x}=\left(x_{0},x_{d0},x_{dd0}\right)$ and $\underline{x}^{\prime}=\left(x_{0}^{\prime},x_{d0}^{\prime},x_{dd0}^{\prime}\right)^{\top}$
such that $\underline{x}\neq\underline{x}^{\prime}$. To show that
the states are not a NSCTP pair we have to find $T$ and $x_{T}$
such that $\delta_{0}^{*}(x_{T})\neq\delta_{0}^{\prime*}(x_{T})$,
where $\delta_{t}^{*}(x_{T})$ and $\delta_{t}^{\prime*}(x_{T})$
are the optimal controls to $x_{T}$ from the initial states $\underline{x}$
and $\underline{x}^{\prime}$ respectively. Recall that the optimal
control at time $0$ is given by\[
\delta_{0}^{*}=\frac{60}{T^{3}}x_{T}-\frac{60}{T^{3}}x_{0}-\frac{36}{T^{2}}x_{d0}-\frac{9}{T}x_{dd0.}\]
The necessary and sufficient condition for equality of the controls
$\delta_{0}^{*}(x_{T})=\delta_{0}^{\prime*}(x_{T})$ is\[
9(x_{dd0}-x_{dd0}^{\prime})T^{2}+36(x_{d0}-x_{d0}^{\prime})T+60(x_{0}-x_{0}^{\prime})=0.\]
Since this is a second order polynomial in $T$, there can be at most
2 roots $T_{1}$ and $T_{2}$. Let $\tilde{T}\neq T_{1},T_{2}$ and
let $x_{\tilde{T}}$ be an arbitrary position, thus for $\tilde{T},x_{\tilde{T}}$,
$\delta_{0}^{*}\neq\delta_{0}^{\prime*}$ which means that the pair
of states $\underline{x}_{0}\neq\underline{x}_{0}^{\prime}$ are not
a NSCTP pair.
\end{proof}
At this point we are ready to prove the existence of a forward model.
\begin{thm}
\label{thm:meth-min-jerk-have-fm}A {}``black box controller'' with
delayed state feedback fulfilling assumption \ref{ass:meth-piecewise-cont}
which renders system (\ref{eq:min_jerk_sys}) with dynamics (\ref{eq:min_jerk_dynamics})
CTPS for an optimal setpoint tracking in constant time task with cost
function (\ref{eq:min_jerk_cost}), must possess a forward model.
In other words, there exists a forward model $F$ such that for each
$t,$ any initial condition $x_{0}^{p}$, and history of tasks $(\underline{x}_{t}^{*},\underline{z}_{t})$,\[
x_{t}^{p}=F(x_{t}^{c},\tilde{x}_{t-D}^{p}).\]
\end{thm}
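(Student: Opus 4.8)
The plan is to apply Theorem~\ref{thm:meth-have-fm-det} directly, since all the hard work of relating the absence of NSCTP pairs to the existence of a forward model has already been done there. The only thing that needs checking is that the hypotheses of that theorem are met in the minimum jerk setting: namely, that Assumptions~\ref{ass:meth-exist-sol}, \ref{ass:meth-inescapable} and~\ref{ass:meth-piecewise-cont} hold, and that the system has no NSCTP pairs of states. The last of these is exactly the content of Lemma~\ref{lem:min_jerk_no_non_sep}, so it may be invoked verbatim.

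First I would verify Assumption~\ref{ass:meth-exist-sol}: for every task $x^{*}=(x^{p*},T)$ with $x^{p*}\in\widetilde{X}$ and every initial state $\underline{x}_{0}^{p}\in X$, a piecewise continuous (indeed smooth) solving control exists, because the quintic interpolation~(\ref{eq:min-jerk-explicit-sol}) with coefficients determined by the six boundary conditions furnishes an explicit trajectory whose third derivatives $\delta_t,\gamma_t$ are the required controls; these are polynomials in $t$, hence continuous, so $\widetilde{U}^{*}_{\epsilon}$ is also nonempty for every $\epsilon$. Next I would address Assumption~\ref{ass:meth-inescapable}: the state space here is $X=\mathbb{R}^{6}$ (or the appropriate product with bounded position coordinates as in the 2-D manipulator discussion), and since the dynamics~(\ref{eq:min_jerk_dynamics}) are a chain of integrators driven by the controls, no finite-time escape from $\mathbb{R}^{6}$ occurs; if one insists on a bounded position range one should take $X$ large enough, or simply note that the relevant inescapability is over the full Euclidean state space, which is trivially inescapable. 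Assumption~\ref{ass:meth-piecewise-cont} is part of the hypothesis of the theorem being proved (``a black box controller fulfilling assumption~\ref{ass:meth-piecewise-cont}''), so nothing is required there.

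With all three assumptions verified and Lemma~\ref{lem:min_jerk_no_non_sep} guaranteeing the absence of NSCTP pairs, I would then simply apply Theorem~\ref{thm:meth-have-fm-det} to the CTPS system: it yields a transformation $F$ such that for every $t$, every initial condition $x_{0}^{p}$, and every history of tasks $(\underline{x}_{t}^{*},\underline{z}_{t})$, one has $x_{t}^{p}=F(x_{t}^{c},\tilde{x}_{t-D}^{p})$, which is exactly the claimed forward model. This completes the proof.

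The step I expect to require the most care is the verification of the assumptions rather than any new mathematics — in particular making sure the inescapability Assumption~\ref{ass:meth-inescapable} is stated for a state space $X$ that is genuinely preserved under all admissible controls (taking $X=\mathbb{R}^{6}$ removes any subtlety), and making sure the task set $X^{*}=\widetilde{X}\times\mathbb{R}$ from Definition~\ref{def:meth-optimal-setpoint-tracking-const-time} is the one for which Lemma~\ref{lem:min_jerk_no_non_sep} was proved, so that the two results compose cleanly. Everything else is a direct citation of the earlier machinery.
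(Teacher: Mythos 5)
Your proposal is correct and follows essentially the same route as the paper's own proof: verify Assumptions~\ref{ass:meth-exist-sol} and \ref{ass:meth-inescapable} via the explicit quintic polynomial solution, invoke Lemma~\ref{lem:min_jerk_no_non_sep} for the absence of NSCTP pairs, and conclude by Theorem~\ref{thm:meth-have-fm-det}. Your treatment of the assumptions is in fact slightly more explicit than the paper's, which simply asserts they hold ``trivially.''
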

\begin{proof}
First, assumptions \ref{ass:meth-exist-sol} and \ref{ass:meth-inescapable}
hold trivially since the optimal trajectory is unique and continuous,
and there exists a polynomial solution for each $x_{0}^{p}\in X$
and $x^{*}\in X^{*}$. From Lemma \ref{lem:min_jerk_no_non_sep} we
have that the system does not have NSCTP pairs, and therefore by Theorem
\ref{thm:meth-have-fm-det} there exists a forward model $F$ such
that for each $x_{0}^{p},\underline{x}_{t}^{*},t$\[
x_{t}^{p}=F(x_{t}^{c},\tilde{x}_{t-D}^{p}).\]

\end{proof}

\section{Discussion\label{sec:Discussion}}

We have studied the general problem of control based on delayed state
observations. For this purpose we have formalized the notion of a
system solving a set of control tasks, which is general enough to
cover many of the standard control settings such as regulation and
tracking. Under rather mild conditions on the system, we have shown
that such a controller \emph{must} contain within itself a forward
model. This implies that the current plant state can be exactly determined
based on the delayed state observation and the internal controller
state. We applied our general framework to two widely studied problems,
linear time optimal control and minimum jerk control, and provided
explicit conditions for the necessity of a forward model. These results,
and the general framework itself, provide powerful mathematical support
for the existence of forward models in biological motor control, and,
in fact, in any control system with delayed feedback. \\

A possible limitation of our approach is its restriction to deterministic
systems, as the notion of a forward model used here is clearly inapp\label{fig:Accessible-sets-meet-1}licable
in a stochastic setting. Since in a stochastic setting one cannot
determine the state precisely, a reasonable requirement in this case
is that the posterior state distribution, based on the observed delayed
state and on previous controls, be determined from the present controller
state. As was shown in \cite{AltNai92}, for additive cost functions
the problem of control with delayed observations can be expressed
as a Markov decision process without delay of a more complicated system.
While we have obtained some results in this more challenging and realistic
setting, the full elaboration of this issue is left for future work.
A further open issue relates to approximate, rather than exact, task
performance. We expect that in this case some notion of approximate
forward model will play a role (e.g., \cite{AndSonIgl08}).\\

An interesting question relates to the necessity of the conditions
we have provided, as we have only shown them to be sufficient. In
fact, it is quite possible that milder conditions than the absence
of NSCTP pairs suffice. Finally, it would clearly be of significant
value to demonstrate the absence of NSCTP pairs, and thus the necessity
of forward models, in more biologically relevant settings. However,
proving this for nonlinear dynamical systems, with a level of complexity
approaching that of biological systems, may require non-trivial analysis.
We hope that simpler and mathematically more tractable conditions
can be developed, whose existence will be easier to demonstrate.

\bibliographystyle{plain}
\bibliography{ForwardModel}

\newpage

\subsection*{Figure Captions}

\bigskip

\noindent \textbf{Figure 1} A delayed feedback control system, where
the delayed plant state $x_{t}^{p}$ is observed by a controller.
The sequence $x_{t}^{*}$ represents a set of tasks, and the sequence
$z_{t}$ denotes the times at which tasks are switched.\\

\noindent \textbf{Figure 2} A simple one-dimensional example where
$X^{*}=X=[-1,1],\, U=[-1,1]$, and the objective is to drive the system
to the point $x^{*}.$ The exact control solution in this case is
$\mathrm{sgn}\left(x^{*}-x_{t}\right)$\\

\noindent \textbf{Figure 3} Two accessible sets meet: The sets $K\left(\tau^{m},x^{1}\right)$
and $K\left(\tau^{m},x^{2}\right)$ intersect at time $\tau^{m}$
with the point $x^{*}$ at the intersection with the outward normals
to the support hyperplane.

\newpage

\begin{figure}[h!]
\begin{centering}
\includegraphics[width=0.5\textwidth]{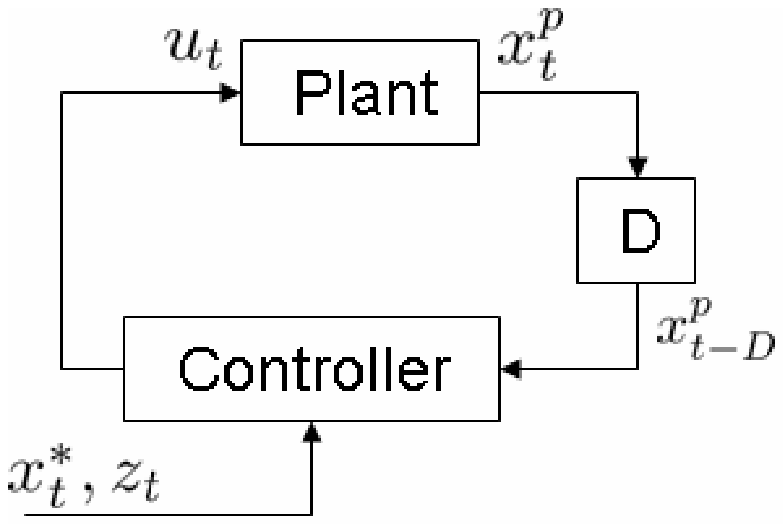}
\end{centering}
\caption{\label{fig:The-system}}
\end{figure}

\begin{figure}[h!]
\begin{centering}
\includegraphics[width=0.5\textwidth]{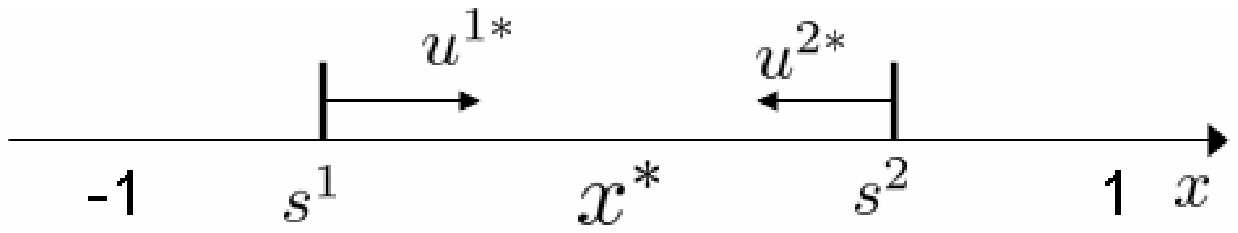}
\end{centering}
\caption{\label{fig:Simple-Example}}
\end{figure}

\begin{figure}[h!]
\begin{centering}
|
\includegraphics[width=0.5\textwidth]{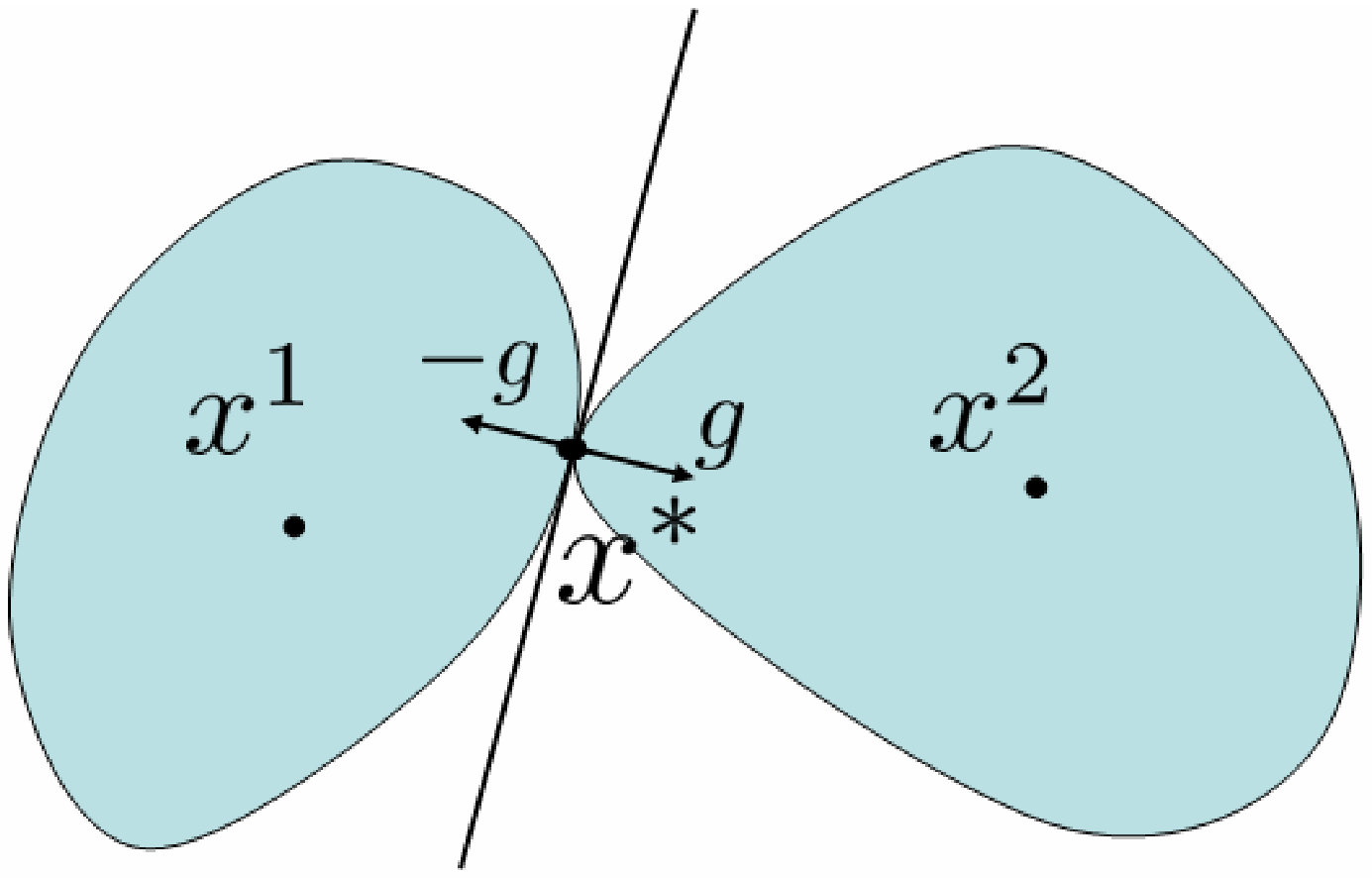}
\end{centering}
\caption{\label{fig:Accessible-sets-meet}}
\end{figure}

\end{document}